\numberwithin{equation}{section}
\newcommand{\Gal}{\operatorname{Gal}}
\theoremstyle{plain}
\newtheorem{Th}{Theorem}[section]
\newtheorem{Lemma}[Th]{Lemma}
\newtheorem{Cor}[Th]{Corollary}
\newtheorem{Prop}[Th]{Proposition}
 \theoremstyle{definition}
\newtheorem{Def}[Th]{Definition}
\newtheorem{?}[Th]{Problem}
\newcommand*{\rom}[1]{\expandafter\@slowromancap\romannumeral #1@}
\newcommand{\Q}{\mathbb{Q}}
\newcommand{\Z}{\mathbb{Z}}
\newcommand{\Sel}{\operatorname{Sel}}
\newcommand{\GL}{\text{GL}}
\newcommand{\G}{\text{G}}
\newcommand{\K}{\Q(\mu_{p^{\infty}})}
\newcommand{\zp}{{\mathbb{Z}}_p}
\newcommand{\F}{\mathbb{F}}
\DeclareSymbolFont{cyrletters}{OT2}{wncyr}{m}{n}
\DeclareFontFamily{U}{wncy}{}
    \DeclareFontShape{U}{wncy}{m}{n}{<->wncyr10}{}
    \DeclareSymbolFont{mcy}{U}{wncy}{m}{n}
    \DeclareMathSymbol{\Sh}{\mathord}{mcy}{"58}
\title{Euler Characteristics and their Congruences in the Positive Rank Setting}
\author{Anwesh Ray}
\address{ Department of Mathematics\\Cornell University \\
  Malott Hall, Ithaca, NY 14853-4201 USA.}
  \email{ar2222@cornell.edu}
\author{R. Sujatha}
\address{Department of Mathematics \\ University of British Columbia \\
  Vancouver BC, V6T 1Z2, Canada.} 
  \email{sujatha@math.ubc.ca}
\begin{document}

\maketitle
\begin{abstract} 
The notion of the truncated Euler characteristic for Iwasawa modules is an extension of the notion of the usual Euler characteristic to the case when the homology groups are not finite. This article explores congruence relations between the truncated Euler characteristics for dual Selmer groups of elliptic curves with isomorphic residual representations, over admissible $p$-adic Lie extensions. Our results extend earlier congruence results from the case of elliptic curves with rank zero to the  case of higher rank elliptic curves. The results provide evidence for the $p$-adic Birch and Swinnerton-Dyer formula without assuming the main conjecture.
\end{abstract}
\section{Introduction}\label{section1}
Iwasawa theoretic invariants for modules arising from the Iwasawa theory of ordinary Galois representations
provide key insights into the arithmetic of such objects. Of particular interest is the behaviour of these invariants when one considers two ordinary Galois representations whose associated residual representations are isomorphic. Greenberg and Vatsal \cite{greenbergvatsal} initiated such a study for elliptic curves, and this was developed further in the works of Emerton-Pollack-Weston \cite{EPW}. Similar investigations were carried out in the context of non-commutative Iwasawa theory in, for instance, \cite{CSSLinks} and \cite{Dok}. 

In all the works referenced above, congruences between the corresponding $L$-values and Euler characteristics of dual Selmer groups of elliptic curves were established over special $p$-adic Lie extensions. A fundamental hypothesis when considering Euler characteristics was that the Euler characteristic was defined, which often entailed the assumption of finiteness of the dual Selmer group over the ground field. This article sets out to explore possible thematic generalizations of such congruence results when this finiteness hypothesis is removed. The natural substitute for the Euler characteristic is the truncated Euler characteristic as considered in \cite{Zerbes} for the cyclotomic extensions. This definition was generalized to  a broader class of  admissible $p$-adic Lie extensions in \cite{CSSLinks} and \cite{perrinriou}. It is striking that the congruence results extend to the rank one case, and to more general $p$-adic Lie-extensions. This leads us to believe that the truncated Euler characteristic is also an intrinsic arithmetic invariant in non-commutative Iwasawa theory.
\par This paper consists of five sections including this introduction. In section 2, we set up notation and requisite preliminaries. Section 3 proves the congruence results in the case of the cyclotomic $\zp$-extension and section 4 establishes similar congruence results in the setting of admissible, non-commutative $p$-adic Lie extensions. More specifically, we prove our results for false Tate curve extensions and $\GL_2$ extensions. In section 5, we discuss explicit numerical examples which demonstrate that our results are optimal.

\par Throughout, let $p\geq 5$ be a prime. Let $E_1$ and $E_2$ be two elliptic curves over the field $\Q$ of rational numbers. Let $\rho_i:\G_{\Q}\rightarrow \GL_2(\Z_p)$ be the Galois representation on the $p$-adic Tate module of $E_i$ and $E_i[p]$ denote the Galois module of the $p$-torsion points of the elliptic $E_i$. Denote by $\bar{\rho}_i:\G_{\Q}\rightarrow\GL_2(\F_p)$ the residual Galois representation. Let $N_i$ be the level of $E_i$ and set $N=N_1N_2$. Assume that
\begin{enumerate}
    \item\label{one} $E_1$ and $E_2$ both have the same algebraic rank $g$,
     \item\label{two} the $p$-adic Galois representations
     $\rho_1\equiv \rho_2\mod{p}$,
     \item\label{three} the residual Galois representations $\bar{\rho}_i$ are irreducible,
    \item\label{four} $E_1$ and $E_2$ both have good ordinary reduction at $p$,
    \item\label{five} $\Sh(E_i/\Q)[p]$ is finite for $i=1,2$,
    \item\label{six} the $p$-adic height pairings on $E_1$ and $E_2$ are non-degenerate.
\end{enumerate}
Conditions $(\ref{five})$ and $(\ref{six})$ are always expected to be true. Let $\Q^{cyc}$ denote the cyclotomic $\Z_p$-extension of $\Q$ and put $\Gamma:=\text{Gal}(\Q^{cyc}/\Q)$. Let $\Q_{\infty}/\Q$ be an admissible $p$-adic Lie-extension of $\Q$ and $G:=\text{Gal}(\Q_{\infty}/\Q)$. Let $\chi_t(\Gamma, E_i)$ (resp. $\chi_t(G, E_i)$) denote the truncated Euler characteristic of $E_i$ with respect to $\Gamma$ (resp. $G$), see section 2 for the definition. 
\par The main conjecture of Iwasawa theory relates the algebraic invariants attached to the dual Selmer group of an elliptic curve with the $p$-adic L-function, which interpolates values of the complex L-functions. Vatsal \cite{VatsalCP} and Greenberg-Vatsal \cite{greenbergvatsal} study congruence properties for complex $L$-values as well as for the $p$-adic L-functions attached to $E_1$ and $E_2$. On the algebraic side, the main conjecture in conjunction with work of Schneider \cite{heightpairings2} and Perrin-Riou would then predict congruences for the Euler characteristic of the Selmer group.
\par For an elliptic curve $E$ with algebraic rank zero, denote the Euler characteristic of the Selmer group of $E$ over the cyclotomic $\Z_p$-extension by $\chi(\Gamma, E)$ (cf. \cite[chapter 3]{GCEC}). When $E$ has algebraic rank zero, the Euler-characteristic $\chi(\Gamma, E)$ coincides with the truncated Euler-characteristic $\chi_t(\Gamma,E)$. Shekhar and the second author \cite{shekharsujatha} deduced congruence results for the Euler characteristic of pairs of congruent elliptic curves $E_1$ and $E_2$ of rank zero. In particular, they deduce that if $\chi(\Gamma, E_1)=1$, then $\chi(\Gamma, E_2)=1$. Let $m$ be a $p$-power free integer coprime to $p$ as well as to the conductors of both elliptic curves. For the false Tate curve extension defined by $\Q_{\infty}=\Q(\mu_{p^{\infty}}, m^{\frac{1}{p^{\infty}}})$ it is shown in \cite[Theorem 3.4]{shekharsujatha} if $\chi(G, E_1)=1$ then so is $\chi(G,E_2)=1$. In this article, analogous results for truncated Euler characteristics in the positive rank setting are proved. We stress that our methods are different from those in \cite{shekharsujatha}, and hence yield another proof of the results in the rank zero case as well. Generalizations of the results in this paper to modular forms and abelian varieties over arbitrary number fields are currently being investigated.
\par The $p$-adic version of the Birch and Swinnerton-Dyer conjecture has been studied, for instance in \cite{padicBSD},\cite{perrinriou} and \cite{heightpairings2}. The results in this paper for the truncated Euler characteristic yield interesting consequences for the $p$-adic Birch and Swinnerton-Dyer conjecture. Let $E$ be an elliptic curve over $\Q$ with good ordinary reduction at $p$ for which $E[p]$ is irreducible as a Galois module. The $p$-adic $L$-function, denoted by $\mathcal{L}(E/\Q,T)$, is a power series in $\Z_p[[T]]$. The characteristic series of the dual Selmer group of $E$ is denoted by $f_E^{alg}(T)$. The main conjecture predicts that $\mathcal{L}(E/\Q,T)$ coincides with $f_E^{alg}(T)$ up to a unit in $\Z_p[[T]]$. The $p$-adic Birch and Swinnerton-Dyer conjecture predicts that the order of vanishing of $\mathcal{L}(E/\Q,T)$ at $T=0$ is equal to the rank of $E(\Q)$ and that there is an exact formula for the leading coefficient of $\mathcal{L}(E/\Q,T)$. The leading coefficient is predicted to be equal to $p^{-g}\rho_p(E)$, where \[\rho_p(E):=\frac{R_p(E/\Q)\times \#(\Sh (E/\Q)[p])}{\#(E(\Q)[p])^2}\times \tau(E)\times \#\tilde{E}(\F_p)^2.\] (see Proposition $\ref{PerrinRiouresult}$). In the above formula, $R_p(E/\Q)$ is the $p$-adic regulator of $E$, defined as the determinant of the $p$-adic height pairing studied in \cite{perrinriou}, \cite{heightpairings1} and \cite{heightpairings2}. When the rank of $E(\Q)$ is equal to zero, the $p$-adic regulator is set to be equal to $1$. When the rank of $E(\Q)$ is positive, the $p$-adic regulator is conjectured to be nonzero. The term $\mathcal{\tau}(E):=\prod_l c_l$ where $c_l$ is the index of $E_0(\Q_l)$ in $E(\Q_l)$,  with $E_0(\Q_l)$ being the subgroup of $l$-adic points with non-singular reduction modulo $l$. The term $\tilde{E}$ is the reduced curve at $p$.
\par It is shown by Perrin-Riou and Schneider that if the $p$-adic regulator of $E$ is not zero, then the order of vanishing of $f_{E}^{alg}(T)$ is equal to the rank of $E(\Q)$ and the leading term of $f_{E}^{alg}(T)$ is equal to $p^{-g}\rho_p(E)$, up to a $p$-adic unit. As a consequence, the $p$-adic Birch and Swinnerton-Dyer conjecture for $E$ follows from the main conjecture for $E$. This is where the truncated Euler characteristic intervenes. The truncated Euler-characteristic $\chi_t(\Gamma, E)$ is equal to the leading coefficient of $f_E^{alg}(T)$ up to a unit (see \cite[Lemma 2.11]{Zerbes}). As a consequence of the results of Schneider and Perrin-Riou mentioned above, $\chi_t(\Gamma, E)=p^{-g}\rho_p(E)$.
\par Let $E_1$ and $E_2$ be elliptic curves satisfying conditions $(\ref{one})$ to $(\ref{six})$. In particular, $E_1[p]$ and $E_2[p]$ are isomorphic as Galois representations. We show that there is an explicit relationship between $\chi_t(\Gamma, E_1)$ and $\chi_t(\Gamma, E_2)$ (see Theorem $\ref{gammacongruence}$) which further relates $p^{-g}\rho_p(E_1)$ and $p^{-g}\rho_p(E_2)$ (see Corollory $\ref{BSDmaincor}$). Next we consider the coefficient of $T^g$ of the $p$-adic L-function $\mathcal{L}(E_i/\Q,T)$ which is given by \[\mathcal{L}^{(g)}(E_i/\Q,0):=\frac{1}{g!}\left(\frac{d^g}{dT^g}\mathcal{L}(E_i/\Q,T)\right)_{\restriction T=0}.\] The $p$-adic Birch and Swinnerton-Dyer conjecture predicts that $\mathcal{L}^{(g)}(E_i/\Q,0)=p^{-g}\rho_p(E_i)$. Using the results of Perrin-Riou and Schneider, which related the latter term $p^{-g}\rho_p(E_i)$ to the truncated Euler-characteristic, our results would translate under the $p$-adic Birch Swinnerton-Dyer conjecture, to a congruence relation between $\mathcal{L}^{(g)}(E_1/\Q,0)$ and $\mathcal{L}^{(g)}(E_2/\Q,0)$. However, we show that this relationship is indeed satisfied independent of the $p$-adic Birch and Swinnerton-Dyer conjecture (see Theorem $\ref{corolloryLfunctions}$), thereby providing evidence for the conjecture.
\par We provide a sketch of the methods used in this manuscript. The set of primes $\Sigma_0$ is chosen to consist of exactly those primes at which $E_1$ or $E_2$ have bad reduction. One may choose a larger set, however, results are optimal for this set. Greenberg and Vatsal in \cite{greenbergvatsal} consider the $\Sigma_0$-imprimitive Selmer group over the cyclotomic $\Z_p$-extension, which we denote by $\Sel^{\Sigma_0}(E/\Q^{cyc})$ (see section $\ref{section2}$ for the definition). Set $f_{E_i,\Sigma_0}^{alg}(T)$ for the characteristic series of $\Sel^{\Sigma_0}(E_i/\Q^{cyc})$ and denote by $\mu_{E_i,\Sigma_0}^{alg}$ and $\lambda_{E_i,\Sigma_0}^{alg}$ the $\mu$ and $\lambda$-invariants of $f_{E_i,\Sigma_0}^{alg}(T)$ respectively. Greenberg and Vatsal show that
\[\mu_{E_1,\Sigma_0}^{alg}=0\Leftrightarrow\mu_{E_2,\Sigma_0}^{alg}=0\text{, and if }\mu_{E_1,\Sigma_0}^{alg}=\mu_{E_2,\Sigma_0}^{alg}=0\text{, then }\lambda_{E_1,\Sigma_0}^{alg}=\lambda_{E_2,\Sigma_0}^{alg}\](see the discussion on \cite[pp. 43]{greenbergvatsal} preceding Remark (2.10)). Using these relations, it becomes possible to deduce that the coefficient of $T^g$ in $f_{E_1,\Sigma_0}(T)$ is $p$-adic unit if and only if that of $T^g$ in $f_{E_2,\Sigma_0}(T)$ is a $p$-adic unit (see the proofs of Corollory $\ref{prethmcorollory}$ and Theorem $\ref{gammacongruence}$). By the results of Schneider and Perrin-Riou discussed earlier, the order of vanishing of $f_{E_i}^{alg}(T)$ at $T=0$ is $g$, and its leading coefficient is equal to $\chi_t(\Gamma, E_i)$, up to a $p$-adic unit. Setting $\Phi_{E_i}:=\prod_{l\in \Sigma_0} |L_l(E_i,1)|_p$, we then compare $\Phi_{E_i}\times \chi_t(\Gamma, E_i)$ for the two elliptic curves.
\par Our results are proved independent of the main conjecture. Note that the main conjecture is known in this setting for $E_i$ if there is a prime $q\neq p$ such that $q||N_i$ and the residual representation $\bar{\rho}_{E_i,p}$ on $E_i[p]$ is ramified at $q$. This is the celebrated theorem of Skinner and Urban \cite{skinnerurban}. We stress that this additional condition is not imposed in proving our results. Leveraging results proved for the cyclotomic extension allows us to prove analogous results for other $p$-adic Lie-extensions (Theorem $\ref{Gcongruence}$ for the false-Tate extension and Theorem $\ref{GEcongruence}$ for certain $\GL_2$-extensions).
\\
\newline\textit{Acknowledgements:} The authors would like to thank Ravi Ramakrishna, Sudhanshu Shekhar and Christian Wuthrich for helpful discussions. The second author gratefully acknowledges support from NSERC Discovery grant 2019-03987.
\section{Preliminaries}\label{section2}
\par Let $p$ be a prime number and assume that $p\geq 5$. Let $E$ be an elliptic curve over $\Q$ with good ordinary reduction at the prime $p$ and $E_{p^{\infty}}$ denote the Galois-module of $p$-power division points on $E$. Let $\Sigma$ be a finite set of primes containing $p$ and the primes at which $E$ has bad reduction.
\subsection{Selmer groups and their Characteristic Polynomials}Denote by $\Q_{\Sigma}$ the maximal extension of $\Q$ in which all primes $l\notin \Sigma$ are unramified, and denote by $\G_{\Q,\Sigma}$ the Galois group of $\Q_{\Sigma}$ over $\Q$. Let $L \subset \Q_{\Sigma}$ be a subfield, we shall be concerned with the cyclotomic $\Z_p$-extension or an admissible $p$-adic Lie extension. Set $\G_{\Sigma}(L):=\text{Gal}(\Q_{\Sigma}/L)$, the Selmer group $\Sel(E/L):=\text{ker} \lambda_{\Sigma}(L)$. Here $\lambda_{\Sigma}(L)$ is the localization map
\[\lambda_{\Sigma}(L):H^1\left(\G_{\Sigma}(L),E_{p^{\infty}}\right)\rightarrow \bigoplus_{l\in \Sigma} \mathcal{H}_l(L),\] where $\mathcal{H}_l(L)$ is taken as in \cite[pp. 23]{greenbergvatsal}.
Let $\Sigma_0\subseteq \Sigma$ be a subset of primes which contains the primes at which $E$ has bad reduction and does not contain $p$. In section $\ref{section3}$, we shall specialize $\Sigma_0$ further. The $\Sigma_0$-imprimitive (or non-primitive) Selmer group $\Sel^{\Sigma_0}(E/L)$ is the kernel of the localization map 
\[\lambda_{\Sigma}^{\Sigma_0}(L):H^1\left(G_{\Sigma}(L),E_{p^{\infty}}\right)\rightarrow \bigoplus_{l\in \Sigma\backslash \Sigma_0} \mathcal{H}_l(L).\]Let $G$ be a profinite group, the Iwasawa algebra $\Lambda(G)$ is defined as
\[\Lambda(G):=\varprojlim_{U}\Z_p[G/U]\]where the inverse limit is taken with respect to open subgroups of $G$. Put $\Gamma=\Gal(\Q^{cyc}/\Q)$ and set $\Lambda$ to be equal to the Iwasawa algebra $\Lambda(\Gamma)$. Note that for any extension $L/\Q$, finite or infinite, the Selmer groups $\Sel(E/L)$ and $\Sel^{\Sigma_0}(E/L)$ are discrete modules over the Iwasawa algebra $\Lambda(\Gal(L/\Q))$. Denote by
\[X(E/L):=\text{Hom}\left(\Sel(E/L),\Q_p/\Z_p\right)\]
\[X^{\Sigma_0}(E/L):=\text{Hom}\left(\Sel^{\Sigma_0}(E/L),\Q_p/\Z_p\right)\]the compact Pontrjagin duals of the Selmer and $\Sigma_0$-imprimitive Selmer groups respectively. It follows from a deep theorem of Kato \cite{kato} that the dual Selmer group $X(E/\Q^{cyc})$ is a torsion $\Lambda$-module. This is equivalent to the localization map $\lambda_{\Sigma}(\Q^{cyc})$ being surjective (cf. \cite[Lemma 2.1]{CSSLinks}). It is an easy result that $X(E/\Q^{cyc})$ is a finitely generated $\Lambda$-module. By the classification theorem of finitely generated torsion $\Lambda$-modules, there is a pseudo-isomorphism 
\[X(E/\Q^{cyc})\sim \left(\bigoplus_{i=1}^n \Lambda/(f_i(T)^{a_i})\right)\oplus \left(\bigoplus_{j=1}^m \Lambda/p^{\mu_j} \right).\]Here, one identifies $\Lambda$ with the formal power series ring $\Z_p[[T]]$ after making a choice of a topological generator $\gamma$ of $\Gamma$ and letting $T:=\gamma-1$. The $f_i(T)$ are irreducible monic polynomials all of whose non-leading coefficients are divisible by $p$. Such polynomials are called distinguished polynomials. The algebraic Iwasawa invariants are defined by 
\[\lambda_E^{alg}=\sum_{i=1}^n a_i \operatorname{deg} f_i\text{, and } \mu_E^{alg} =\sum_{j=1}^m \mu_j.\]
The characteristic polynomial of $X(E/\Q^{cyc})$ is defined by
\[f_E^{alg}(T)=p^{\mu_E^{alg}}\prod_{i=1}^n f_i(T)^{a_i}.\]It is the unique element generating the characteristic ideal of $X(E/\Q^{cyc})$ which is a power of $p$ times a distinguished polynomial. It is expected that when $E$ is an elliptic curve for which $E[p]$ is irreducible, then $\mu_{E}^{alg}=0$. It is well known that the condition $\mu_{E}^{alg}=0$ depends only on the residual representation $E[p]$ (see \cite[pp. 19]{greenbergvatsal}). That is, if two elliptic curves $E_1$ and $E_2$ over $\Q$ with good ordinary reduction at $p$ have isomorphic residual representation at $p$, then $\mu_{E_1}^{alg}=0$ if and only if $\mu_{E_2}^{alg}=0$. The $\lambda$-invariant $\lambda_{E}^{alg}$ need not be zero and moreover, need not depend on the residual representation (see \cite[pp. 22]{greenbergvatsal}). We define Iwasawa invariants $\mu_{E,\Sigma_0}^{alg}$ and $\lambda_{E,\Sigma_0}^{alg}$ associated to the $\Sigma_0$-imprimitive dual Selmer group $X^{\Sigma_0}(E/\Q^{cyc})$. For $l\in \Sigma_0$, let $h_E^{(l)}(T)$ be the characteristic polynomial of the Pontryajin dual of the $\mathcal{H}_l(\Q^{cyc})$. Let $\sigma_E^{(l)}$ be such that $\mathcal{H}_l(\Q^{cyc})\simeq (\Q_p/\Z_p)^{\sigma_{E}^{(l)}}$ as in \cite[pp. 23]{greenbergvatsal}. The characteristic polynomial of the $\Lambda$-module $\Sel^{\Sigma_0}(E/\Q^{cyc})$ is denoted by $f_{E,\Sigma_0}^{alg}(T)$. There is a short exact sequence
\[0\rightarrow \Sel(E/\Q^{cyc})\rightarrow \Sel^{\Sigma_0}(E/\Q^{cyc})\rightarrow \prod_{l\in \Sigma_0} \mathcal{H}_l(\Q^{cyc})\rightarrow 0.\]As a result,
\[f_{E,\Sigma_0}^{alg}(T)=f_{E}^{alg}(T)\prod_{l\in \Sigma_0} h_E^{(l)}(T).\] Letting $\lambda_{E,\Sigma_0}^{alg}$ denote the $\lambda$-invariant of $X^{\Sigma_0}(E/\Q^{cyc})$. It is related to $\lambda_{E}^{alg}$ according to the formula \cite[equation (7)]{greenbergvatsal}
\[\lambda_{E,\Sigma_0}^{alg}=\lambda_{E}^{alg}+\sum_{l\in \Sigma_0} \sigma_E^{(l)}.\]
\subsection{$p$-adic L-functions}
\par There is a similar story for $p$-adic L-functions. Let $g$ denote the algebraic rank of $E$ and $\Omega_{E}$ denote the the real N\'eron period of $E$. For any even Dirichlet character $\rho$, it is well known that $L(E/\Q, \rho, 1)/\Omega_{E}\in\bar{\Q}_p$. Recall that in the previous subsection, a topological generator $\gamma\in\Gamma$ has been fixed. Let $\mu_{p^{\infty}}$ denote the Galois module of $p$-torsion roots of unity. Given $\xi\in \mu_{p^{\infty}}$, let $m$ be the integer such that the order of $\xi$ is $p^{m-1}$. Associate to $\xi$ a character $\rho:\Gamma\rightarrow \mu_{p^{\infty}}$ of finite order defined by $\rho(\gamma)=\xi$. Let $\alpha_p\in \bar{\Q}_p$ denote the unit root of the characteristic polynomial $1-a_p(E)X+pX^2$. Mazur and Swinnerton-Dyer associate to $E$ an element $\mathcal{L}(E/\Q, T)\in \Lambda\otimes \Q_p$ satisfying the property
\[\mathcal{L}(E/\Q, \xi-1)=\tau(\rho^{-1})\cdot \alpha_p^{-m} \frac{L(E/\Q, \rho,1)}{\Omega_E}.\] Here, $\tau(\rho^{-1})$ denotes the Gauss-sum of the character $\rho^{-1}$. Let $\chi$ be the $p$-adic cyclotomic character and
let $\kappa=\chi_{\restriction \Gamma}:\Gamma\xrightarrow{\sim} (1+p\Z_p)$. The $p$-adic $L$-function $L_p(E,s)$ is defined by
\[L_p(E,s)=\mathcal{L}(E/\Q, \kappa(\gamma)^{1-s}-1).\] Set \[L_p^{(g)}(E,s):=\frac{1}{g!}\left(\frac{d}{ds}\right)^g L_p(E,s),\] likewise, set \[\mathcal{L}^{(g)}(E/\Q,T):=\frac{1}{g!}\left(\frac{d}{dT}\right)^g \mathcal{L}^{(g)}(E/\Q,T).\]
\par
 Let $P_l(E/\Q, \rho,T)$ be the local-factor defined as follows
\[P_l(E/\Q,\rho, T)=\begin{cases}
1-a_l \rho(l) T+l\rho(l)^2 T^2\text{ if }l \text{ is a prime of good reduction,}\\
1-a_l \rho(l) T\text{ if }E \text{ has bad reduction at }l.
\end{cases}\]
If $l$ is a prime of bad reduction then
\[a_l=\begin{cases}
1\text{ if }E\text{ has split multiplicative reduction},\\
-1\text{ if }E\text{ has non-split multiplicative reduction},\\
0\text{ if }E\text{ has additive reduction.}
\end{cases}\]
When $\rho$ is the trivial character it is dropped from the notation. The local Euler factor $L_l(E,s)$ at $l$ is equal to $L_l(E,s):=P_l(E,l^{-s})^{-1}$.
Let $\mathcal{P}_l(E/\Q,T)\in \Lambda$ be defined by the relation
$\mathcal{P}_l(E/\Q,\xi-1)=P_l(E/\Q,\rho,l^{-1})$. The non-primitive $L$-function is \[L^{\Sigma_0}(E/\Q,\rho,s)=L(E/\Q, \rho,s)\prod_{l\in \Sigma_0}P_l(E/\Q,\rho, l^{-s}).\]
The function $\mathcal{L}(E/\Q,T)$ is modified to a non-primitive version $\mathcal{L}^{\Sigma_0}(E/\Q, T)$ for which 
\[\mathcal{L}^{\Sigma_0}(E/\Q, \xi-1)=\tau(\rho^{-1})\alpha_p^{-m}\frac{L^{\Sigma_0}(E/\Q,\rho,1)}{\Omega_E}\]
(cf. \cite{greenbergvatsal} and \cite{MTT}).
For $l\in \Sigma_0$, let $\gamma_l$ denote the Frobenius automorphism of $l$ in $\Gamma=\Gal(\Q_{\infty}/\Q)$. Factor $P_l(E/\Q,T)=(1-\alpha_l T)(1-\beta_l T)$. Consider the power series $\mathcal{P}_l(E/\Q,T)\in \Z_p[[T]]$ corresponding to \[\mathcal{P}_l:=\left(1-\alpha_l l^{-1}\gamma_l\right) \left(1-\beta_l l^{-1}\gamma_l\right)\in \Lambda.\]Observe that $\mathcal{P}_l(E/\Q,0)=P_l(E/\Q, l^{-1})=L_l(E,1)^{-1}$. The following relationship is satisfied
\begin{equation}\label{equationscriptL}\mathcal{L}^{\Sigma_0}(E/\Q,T)=\mathcal{L}(E/\Q,T)\prod_{l\in \Sigma_0} \mathcal{P}_l(E/\Q,T).\end{equation}
It is well known that if $E[p]$ is irreducible as a Galois module, then $\mathcal{L}(E,T)\in \Lambda$ (see \cite[Proposition 3.7]{greenbergvatsal}). By the Weierstrass Preparation theorem, 
\[\mathcal{L}(E/\Q, T)=p^{\mu}\cdot u(T)g(T)\] where $u(T)$ is a unit and $g(T)$ a distinguished polynomial. Let $\mu_{E}^{anal}:=\mu$ as above and $f_{E}^{anal}(T):=p^{\mu_{E}^{anal}}g(T)$. Let $\lambda_{E}^{anal}$ denote the degree of $f_{E}^{anal}(T)$. The main conjecture states that $f_{E}^{alg}(T)=f_{E}^{anal}(T)$. Under additional hypotheses, this has been proved by Skinner and Urban \cite{skinnerurban}. In this manuscript we do not assume these additional hypotheses and the results are independent of the celebrated theorem of Skinner-Urban.
\par One has analogous definitions for analytic invariants associated to the $\Sigma_0$-imprimitive $p$-adic L-function $\mathcal{L}^{\Sigma_0}(E,T)$. For $l\in \Sigma_0$, it is the case that $\mathcal{P}_l$ generates the characteristic ideal of the Pontryajin dual of $\mathcal{H}_l(\Q^{cyc})$ \cite[Proposition 2.4]{greenbergvatsal}. It follows from the relation $\ref{equationscriptL}$ that \[\lambda_{E,\Sigma_0}^{anal}=\lambda_{E}^{anal}+\sum_{l\in \Sigma_0} \sigma_E^{(l)}.\] Greenberg and Vatsal \cite[Theorem 1.4]{greenbergvatsal} show that if $E_1$ and $E_2$ are two elliptic curves defined over $\Q$ which are congruent modulo $p$, then if the equalities 
\[\mu_{E_1}^{alg}=\mu_{E_1}^{anal}=0\text{, and } 
\lambda_{E_1}^{alg}=\lambda_{E_1}^{anal}\]hold, then so do the equalities\[\mu_{E_2}^{alg}=\mu_{E_2}^{anal}=0\]and 
\[\lambda_{E_2}^{alg}=\lambda_{E_2}^{anal}.\]Kato showed that the polynomial $f_{E}^{alg}(T)$ always divides $f_E^{anal}(T)$. As a result, if the main conjecture is true for $E_1$ and the $\mu$-invariant for $E_1$ is zero, then the same is true for $E_2$.
\subsection{Generalized Euler Characteristics} Recall that an admissible $p$-adic Lie-extension $\Q_{\infty}/\Q$ is a Galois extension for which
\begin{itemize}
    \item $G=\text{Gal}(\Q_{\infty}/\Q)$ is isomorphic to a compact $p$-adic Lie group,
    \item all but finitely many primes are unramified in $\Q_{\infty}$,
    \item $\Q^{\text{cyc}}\subset \Q_{\infty}$,
    \item $G$ does not contain an element of order $p$.
\end{itemize} Set $H=\Gal(\Q_{\infty}/\Q^{\text{cyc}})$ and note that $G/H\simeq \Gamma=\Gal(\Q^{\text{cyc}}/\Q)$. The Euler characteristic $\chi(\Gamma,E):=\chi(\Gamma, \Sel(E/\Q^{\text{cyc}}))$. When $E$ has positive rank, the Euler characteristic is not defined. The natural analog is the truncated Euler characteristic which we define below. For any homomorphism of abelian groups $f:M_1\rightarrow M_2$, denote by $\operatorname{ker} f$ and $\operatorname{cok} f$ the kernel of $f$ and the cokernel of $f$ respectively.
\begin{Def}
Let $D$ be a discrete $p$-primary $\Gamma$-module let $\phi_D$ be the natural map 
\[\phi_D: H^0\left(\Gamma, D\right)=D^{\Gamma}\rightarrow D_{\Gamma}\simeq H^1\left(\Gamma,D\right)\]
for which $\phi_D(x)$ is the residue class of $x$ in $D_{\Gamma}$. If both $\operatorname{ker}(\phi_D)$ and $\operatorname{cok}(\phi_D)$ are finite $D$ is said to have finite truncated Euler-characteristic. In this case, the truncated Euler characteristic is defined by \[\chi_t\left(\Gamma, D\right):=\frac{\#\text{ker}(\phi_D)}{\#\text{cok}(\phi_D)}.\]
\end{Def}
The truncated $G$-Euler characteristic is a variation of the above definition.
\begin{Def}Let $D$ be a discrete $p$-primary $G$-module, define 
\[\psi_D: H^0(\Gamma,D^H)\rightarrow H^1(G,D)\] as the composite of the natural map \[\phi_{D^H}:H^0(\Gamma, D^H)\rightarrow H^1(\Gamma, D^H)\] with the inflation map $H^1(\Gamma, D^H)\rightarrow H^1(G,D)$. The module $D$ has \textit{finite truncated $G$-Euler characteristic} if both $\operatorname{ker}(\psi_D)$ and $\operatorname{cok}(\psi_D)$ are finite and the truncated $G$-Euler characteristic is then defined by
\[\chi_t(G,D):=\frac{\# \operatorname{ker}(\psi_D)}{\# \operatorname{cok}(\psi_D)}.\]
\end{Def}
Denote by 
\[\begin{split}
    &\chi_t(\Gamma, E)=\chi_t\left(\Gamma, \Sel(E,\Q^{\text{cyc}})\right)\\
     &\chi_t(G, E)=\chi_t\left(G, \Sel(E,\Q_{\infty})\right).
\end{split}\]The truncated Euler characteristic $\chi_t(\Gamma, E)$ is, up to a $p$-adic unit, the leading term of the $p$-adic L-function $L_p(E,s)$ (cf. Theorem $\ref{TheoremchiLfunctions}$) and is therefore a $p$-adic integer. Let $\tilde{E}$ denote the reduced curve at $p$. For a prime $l$, we let $c_l(E):=\#(E(\Q_l)/E_0(\Q_l))$ and set $\tau(E):=\prod_l c_l $. Two $p$-adic integers $a\sim b$ if $a$ and $b$ are not zero and $a/b$ is a $p$-adic unit in $\Z_p$. In the rank zero case, if $\Sel(E/\Q)$ is finite, the Euler characteristic $\chi(\Gamma, E)$ is related to the Birch Swinnerton-Dyer exact formula as follows
\begin{equation}\label{BSDEulerchar}\chi(\Gamma, E)\sim \frac{\#\left(\Sh(E/\Q)[p]\right)}{\#\left(E(\Q)[p]\right)^2}\times \tau(E) \times  \#\tilde{E}(\F_p)^2.\end{equation}
\par In the positive rank case, the $p$-adic height pairing (cf. \cite{heightpairings1} and \cite{heightpairings2}) plays a role. For an elliptic curve $E$ over $\Q$ with positive rank, the $p$-adic height pairing is conjectured to be non-degenerate (cf. \cite{heightpairings2}). The $p$-adic regulator $R_p(E/\Q)$ is the determinant of the $p$-adic height pairing.
\begin{Prop}\label{PerrinRiouresult} (cf. \cite[section 3]{CSSLinks} and \cite{perrinriou})
Assume that $\Sh(E/\Q)[p]$ is finite, that the residual Galois representation on $E[p]$ is irreducible, that $E$ has good ordinary reduction at $p$ and that $R_p(E/\Q)$ is nonzero. Then
$\chi_t(\Gamma, E)\sim p^{-g}\rho_p(E)$
where 
\[\rho_p(E):=\frac{R_p(E/\Q)\times \#(\Sh (E/\Q)[p])}{\#(E(\Q)[p])^2}\times \tau(E)\times \#\tilde{E}(\F_p)^2.\]
\end{Prop}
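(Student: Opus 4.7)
The plan is to prove the proposition in two steps, following \cite{perrinriou}, \cite{heightpairings2} and \cite[\S 3]{CSSLinks}.

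First, I would reduce the problem to computing the leading coefficient of the algebraic characteristic series $f_E^{alg}(T)$ at $T=0$. By \cite[Lemma 2.11]{Zerbes}, recalled in the introduction, $\chi_t(\Gamma,E)$ equals, up to a $p$-adic unit, the leading coefficient of $f_E^{alg}(T)$ at $T=0$, whenever both quantities are defined. Under the running hypotheses — good ordinary reduction at $p$, irreducibility of $E[p]$, finiteness of $\Sh(E/\Q)[p]$, and non-degeneracy of the $p$-adic height pairing — Schneider's theorem ensures that the order of vanishing of $f_E^{alg}(T)$ at $T=0$ equals $g$, and that both $\operatorname{ker}(\phi_D)$ and $\operatorname{cok}(\phi_D)$ are finite for $D=\Sel(E/\Q^{cyc})$. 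Hence $\chi_t(\Gamma,E)$ is defined, and it suffices to identify this leading coefficient with $p^{-g}\rho_p(E)$ up to a unit.

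Second, I would invoke the Schneider--Perrin-Riou formula for the leading coefficient. The strategy is to analyze $X(E/\Q^{cyc})$ using Greenberg's control theorem, which compares $\Sel(E/\Q)$ with $\Sel(E/\Q^{cyc})^{\Gamma}$ and whose kernel and cokernel are controlled in terms of the local Tamagawa numbers $c_l(E)$ and the orders $\#\tilde{E}(\F_p)$. Pontryagin-dualizing and applying the structure theorem for finitely generated torsion $\Lambda$-modules, one factors the leading coefficient of $f_E^{alg}(T)$, up to a $p$-adic unit, into: (i) local contributions $\tau(E)\cdot \#\tilde{E}(\F_p)^2$; (ii) the torsion contribution $\#\Sh(E/\Q)[p]/\#E(\Q)[p]^2$ coming from the finite part of $\Sel(E/\Q)$ modulo its maximal divisible submodule; and (iii) a factor $p^{-g}R_p(E/\Q)$ arising from the determinant of the $p$-adic height pairing acting on the $g$-dimensional divisible part. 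Assembling (i)--(iii) reproduces $p^{-g}\rho_p(E)$.

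The main obstacle is the identification in (iii) of the determinant with the $p$-adic regulator $R_p(E/\Q)$. This is the technical core of Schneider's argument: one must show that the characteristic ideal, restricted to the summand supported at $(T)$, is generated by $T^g\cdot R_p(E/\Q)$ up to a unit. Establishing this requires the functoriality of the $p$-adic height pairing under the Kummer map, its compatibility with the $(T)$-adic filtration of $X(E/\Q^{cyc})$ induced by the augmentation ideal of $\Lambda$, and the non-degeneracy hypothesis on the height pairing — the latter ensuring that the relevant $g\times g$ minor of the matrix representing the pairing is a $p$-adic unit multiple of $R_p(E/\Q)$. Once this leading-coefficient formula is in hand, combining it with the first step yields $\chi_t(\Gamma,E)\sim p^{-g}\rho_p(E)$.
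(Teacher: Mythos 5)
The paper does not actually prove Proposition \ref{PerrinRiouresult}: it is stated as a citation of results of Schneider and Perrin-Riou via \cite[section 3]{CSSLinks} and \cite{perrinriou}, so there is no in-text argument to compare against. Your two-step sketch --- first identifying $\chi_t(\Gamma,E)$ with the leading coefficient of $f_E^{alg}(T)$ up to a $p$-adic unit via \cite[Lemma 2.11]{Zerbes} (which the paper itself invokes, cf.\ Lemma \ref{truncatedECSelmer1}), and then appealing to the Schneider--Perrin-Riou leading-coefficient formula decomposed via the control theorem into Tamagawa/reduction factors, the Tate--Shafarevich contribution, and the $p$-adic regulator --- is a faithful outline of the argument in those references, and you are right that the technical core is the identification of the $(T)$-primary part of the characteristic ideal with the determinant of the $p$-adic height pairing.
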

If the order of $\Sh(E/\Q)[p]$ is known to be finite and the $p$-adic regulator of $E$ is non-zero, then it is known that the order of vanishing of $f_{E}^{alg}(T)$ at $T=0$ is equal to $g$, the algebraic rank of $E$. This is a result of Schneider and Perrin-Riou (see for instance \cite[Theorem 2$'$]{heightpairings2}). These conditions are imposed on the elliptic curves $E_1$ and $E_2$. The truncated Euler characteristic is related to the characteristic series $f_{E}^{alg}(T)$. Write $f_{E}^{alg}(T)=T^g g_{E}(T)$ where $g_{E}(0)\neq 0$. The following is a direct consequence of \cite[Lemma 2.11]{Zerbes}.
\begin{Lemma}\label{truncatedECSelmer1}
Assume that $E$ is an elliptic curve over $\Q$ for which
\begin{enumerate}
    \item $E$ has good ordinary reduction at $p$,
    \item $E[p]$ is irreducible as a Galois module,
    \item $\Sh(E/\Q)[p]$ is finite,
    \item the $p$-adic regulator of $E$ is non-zero.
\end{enumerate}
Then, $\chi_t(\Gamma, E)=|g_E(0)|_p^{-1}$.

In particular if $E$ is an elliptic curve satisfying the above conditions, it follows that $\chi_t(\Gamma, E)=p^N$ for some $N\in \Z_{\geq 0}$.
\end{Lemma}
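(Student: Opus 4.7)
The plan is to deduce the lemma from Lemma 2.11 of \cite{Zerbes}, using as input the Schneider--Perrin-Riou determination of the order of vanishing of $f_E^{alg}(T)$ at $T=0$ recalled just above the lemma statement.

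First I would invoke that Schneider--Perrin-Riou result: under hypotheses (1)--(4), the order of vanishing of the characteristic series $f_E^{alg}(T)$ at $T=0$ equals the algebraic rank $g$, which justifies writing $f_E^{alg}(T) = T^g g_E(T)$ with $g_E(0) \neq 0$.

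Next I would verify that the truncated Euler characteristic $\chi_t(\Gamma, E)$ is finite, so that Zerbes' lemma applies. Writing $D = \Sel(E/\Q^{cyc})$ and $X = X(E/\Q^{cyc})$, Pontryagin duality identifies the map $\phi_D$ with the dual of the natural composition $\pi_X \colon X^{\Gamma} \hookrightarrow X \twoheadrightarrow X_{\Gamma}$; consequently $\#\operatorname{ker}(\phi_D) = \#\operatorname{cok}(\pi_X)$ and $\#\operatorname{cok}(\phi_D) = \#\operatorname{ker}(\pi_X)$. Both $X^{\Gamma} = X[T]$ and $X_{\Gamma} = X/TX$ are finitely generated $\Z_p$-modules whose $\Z_p$-ranks coincide with the order of vanishing of the characteristic ideal at $T=0$, namely $g$. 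The non-degeneracy of the $p$-adic height pairing (hypothesis (4)) translates, via the Schneider--Perrin-Riou framework, into the statement that $\pi_X$ induces an isomorphism after tensoring with $\Q_p$. Hence $\operatorname{ker}(\pi_X)$ and $\operatorname{cok}(\pi_X)$ are finite and $\chi_t(\Gamma, E)$ is a well-defined $p$-adic integer.

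With finiteness secured, Zerbes' Lemma 2.11 applies directly: for any cotorsion $\Lambda$-module with finite truncated Euler characteristic whose Pontryagin dual has characteristic polynomial $T^r h(T)$ with $h(0) \neq 0$, the truncated Euler characteristic equals $|h(0)|_p^{-1}$. Taking $r = g$ and $h = g_E$ yields $\chi_t(\Gamma, E) = |g_E(0)|_p^{-1}$. For the final assertion that $\chi_t(\Gamma, E) = p^N$ with $N \geq 0$, recall $f_E^{alg}(T) = p^{\mu_E^{alg}} \prod_i f_i(T)^{a_i}$ with each $f_i$ distinguished; after extracting the $T$-power factors to produce $g_E$, each remaining $f_i$ is a distinguished polynomial of positive degree with $f_i(T) \neq T$, so $f_i(0) \in p\Z_p \setminus \{0\}$. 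Thus $g_E(0)$ is a non-negative power of $p$, and $\chi_t(\Gamma, E) = |g_E(0)|_p^{-1} = p^N$ for some $N \geq 0$.

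The main obstacle is the finiteness step: one must upgrade the mere equality of $\Z_p$-ranks of $X[T]$ and $X/TX$ (which is automatic from matching orders of vanishing) to the stronger assertion that $\pi_X$ is an isomorphism on $\Z_p$-free parts. This is precisely where the non-degeneracy of the $p$-adic height pairing, hypothesis (4), is indispensable, and it is exactly what the Schneider--Perrin-Riou analysis delivers.
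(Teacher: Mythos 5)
Your proposal follows essentially the same route as the paper, which presents the lemma as a direct consequence of Zerbes' Lemma 2.11 once the Schneider--Perrin-Riou result (that $\operatorname{ord}_{T=0}f_E^{alg}(T)=g$ under hypotheses (3) and (4)) has been recalled. The additional detail you supply --- identifying $\phi_D$ dually with $\pi_X\colon X^{\Gamma}\to X_{\Gamma}$, invoking non-degeneracy of the $p$-adic height to establish finiteness, and extracting the conclusion that $\chi_t(\Gamma,E)$ is a power of $p$ from the distinguished-polynomial factorization --- is consistent with what the paper leaves implicit in the phrase ``direct consequence.''
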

\begin{Def}\label{PhiDef}Let $E$ be an elliptic curve over $\Q$ and $\Sigma_0$ a set of primes containing the primes at which $E$ has bad reduction and not containing $p$. Set $\Phi_{E}:=\prod_{l\in \Sigma_0} |L_l(E, 1)|_p$. Here, $|L_l(E, 1)|_p$ is set to be equal to $0$ if $L_l(E, 1)^{-1}=0$.
\end{Def}
Writing $f_{E,\Sigma_0}^{alg}(T)=T^g g_{E,\Sigma_0}(T)$, the following is a direct consequence of Lemma $\ref{truncatedECSelmer1}$.
\begin{Lemma}
Let $E$ be an elliptic curve satisfying the conditions of Lemma $\ref{truncatedECSelmer1}$. Then 
\[\Phi_E\times \chi_t(\Gamma, E)=|g_{E,\Sigma_0}(0)|_{p}^{-1}.\]
\end{Lemma}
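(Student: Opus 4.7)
The plan is to factor $f_{E,\Sigma_0}^{alg}(T)$ as a product involving $f_E^{alg}(T)$ and the local contributions $h_E^{(l)}(T)$, and then to evaluate at $T=0$. Recall from the preliminary discussion that
\[
f_{E,\Sigma_0}^{alg}(T)=f_E^{alg}(T)\prod_{l\in \Sigma_0}h_E^{(l)}(T).
\]
Since $f_E^{alg}(T)=T^g g_E(T)$ by hypothesis, the definition $f_{E,\Sigma_0}^{alg}(T)=T^g g_{E,\Sigma_0}(T)$ forces the identification
\[
g_{E,\Sigma_0}(T)=g_E(T)\prod_{l\in \Sigma_0}h_E^{(l)}(T).
\]
Evaluating at $T=0$ and taking $p$-adic absolute values yields
\[
|g_{E,\Sigma_0}(0)|_p=|g_E(0)|_p\prod_{l\in \Sigma_0}|h_E^{(l)}(0)|_p.
\]

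The second step is to identify each local factor $|h_E^{(l)}(0)|_p$ with $|L_l(E,1)|_p^{-1}$. For this I would invoke Greenberg--Vatsal's Proposition 2.4 (quoted earlier in the excerpt), which asserts that $\mathcal{P}_l(E/\Q,T)$ also generates the characteristic ideal of the Pontryagin dual of $\mathcal{H}_l(\Q^{cyc})$. Since $h_E^{(l)}(T)$ is by definition a generator of the same characteristic ideal, the two differ by a unit in $\Lambda$, so in particular $h_E^{(l)}(0)$ and $\mathcal{P}_l(E/\Q,0)$ differ by a unit in $\Z_p$. Using the identity $\mathcal{P}_l(E/\Q,0)=L_l(E,1)^{-1}$ recorded in Section~2, I obtain
\[
|h_E^{(l)}(0)|_p=|\mathcal{P}_l(E/\Q,0)|_p=|L_l(E,1)|_p^{-1},
\]
with the convention built into Definition \ref{PhiDef} handling the edge case $L_l(E,1)^{-1}=0$ uniformly on both sides.

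The final step is to assemble the pieces. By Lemma \ref{truncatedECSelmer1}, $\chi_t(\Gamma,E)=|g_E(0)|_p^{-1}$. Combining the two displayed equalities with the definition $\Phi_E=\prod_{l\in \Sigma_0}|L_l(E,1)|_p$ gives
\[
|g_{E,\Sigma_0}(0)|_p=|g_E(0)|_p\cdot \Phi_E^{-1},
\]
whence $|g_{E,\Sigma_0}(0)|_p^{-1}=\Phi_E\cdot |g_E(0)|_p^{-1}=\Phi_E\cdot \chi_t(\Gamma,E)$, which is the claimed formula. There is essentially no obstacle here beyond unwinding the definitions; the only point that requires care is the passage from the characteristic ideal identification to an equality of $p$-adic absolute values at $T=0$, which is automatic once one notes that generators of the same principal ideal in $\Lambda$ differ by a unit and units evaluate to $p$-adic units at $T=0$.
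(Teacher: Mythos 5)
Your argument is correct and is precisely the unwinding the paper intends (it labels the lemma a ``direct consequence'' of Lemma \ref{truncatedECSelmer1} without spelling it out): factor $g_{E,\Sigma_0}(T)=g_E(T)\prod_{l\in\Sigma_0}h_E^{(l)}(T)$, use that $h_E^{(l)}$ and $\mathcal{P}_l$ generate the same characteristic ideal so that $|h_E^{(l)}(0)|_p=|\mathcal{P}_l(E/\Q,0)|_p=|L_l(E,1)|_p^{-1}$, and combine with $\chi_t(\Gamma,E)=|g_E(0)|_p^{-1}$. The remark about units of $\Lambda$ specializing to units of $\Z_p$ at $T=0$ is exactly the point that makes the identification of absolute values legitimate.
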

In the above $|g_{E,\Sigma_0}(0)|_{p}^{-1}$ is taken to be $0$ if $g_{E,\Sigma_0}(0)=0$.
\begin{Cor}\label{prethmcorollory}
Let $E$ be an elliptic curve satisfying the conditions of Lemma $\ref{truncatedECSelmer1}$. Then $\Phi_E\times \chi_t(\Gamma, E)=1$ if and only if $\mu_{E}^{alg}=0$ and $\lambda_{E,\Sigma_0}=g$. 
\end{Cor}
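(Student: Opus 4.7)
The plan is to combine the preceding lemma with an elementary examination of the Weierstrass form of $f_{E,\Sigma_0}^{alg}(T)$. By the lemma, the condition $\Phi_E \cdot \chi_t(\Gamma, E) = 1$ is equivalent to $g_{E,\Sigma_0}(0)$ being a $p$-adic unit, where $f_{E,\Sigma_0}^{alg}(T) = T^g g_{E,\Sigma_0}(T)$ with $g_{E,\Sigma_0}(0) \neq 0$. The task is thus to translate this unit condition into the asserted statements about $\mu_E^{alg}$ and $\lambda_{E,\Sigma_0}^{alg}$.

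First I would write $f_{E,\Sigma_0}^{alg}(T) = p^{\mu_{E,\Sigma_0}^{alg}} P(T)$, where $P(T)$ is a distinguished polynomial of degree $\lambda_{E,\Sigma_0}^{alg}$. Since each local factor $h_E^{(l)}(T)$ for $l \in \Sigma_0$ is distinguished (its $\mu$-invariant is zero), the factorization $f_{E,\Sigma_0}^{alg}(T) = f_E^{alg}(T) \prod_{l \in \Sigma_0} h_E^{(l)}(T)$ gives $\mu_{E,\Sigma_0}^{alg} = \mu_E^{alg}$. I would also remark that, since $h_E^{(l)}(0) \neq 0$ for each $l \in \Sigma_0$ (it equals $L_l(E,1)^{-1}$ up to a unit, which is nonzero for $l$ of bad reduction away from $p$), the order of vanishing of $f_{E,\Sigma_0}^{alg}(T)$ at $T=0$ coincides with that of $f_E^{alg}(T)$, namely $g$, by the Schneider--Perrin-Riou result cited before Lemma \ref{truncatedECSelmer1}.

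Next I would split into three cases. If $\mu_E^{alg} > 0$, then every coefficient of $f_{E,\Sigma_0}^{alg}(T)$ lies in $p\Z_p$, so in particular $g_{E,\Sigma_0}(0)$, being the coefficient of $T^g$ in $f_{E,\Sigma_0}^{alg}(T)$, is not a unit. Assume then that $\mu_E^{alg} = 0$, so that $f_{E,\Sigma_0}^{alg}(T)$ is itself distinguished of degree $\lambda_{E,\Sigma_0}^{alg}$. Divisibility by $T^g$ forces $\lambda_{E,\Sigma_0}^{alg} \geq g$. When $\lambda_{E,\Sigma_0}^{alg} = g$ the quotient $g_{E,\Sigma_0}(T)$ is the constant $1$ (the leading coefficient of a distinguished polynomial), so $g_{E,\Sigma_0}(0) = 1$ is a unit; whereas when $\lambda_{E,\Sigma_0}^{alg} > g$ the value $g_{E,\Sigma_0}(0)$ equals a non-leading coefficient of the distinguished polynomial $f_{E,\Sigma_0}^{alg}(T)$, hence lies in $p\Z_p$.

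Collating the three cases produces the claimed equivalence. The argument is essentially bookkeeping on the Weierstrass form, and no substantial obstacle is anticipated; the only point demanding a moment's care is the passage $\mu_{E,\Sigma_0}^{alg} = \mu_E^{alg}$ between the primitive and non-primitive settings, which reduces to the fact that each $h_E^{(l)}$ is distinguished.
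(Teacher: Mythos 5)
Your proof is correct and follows essentially the same route as the paper's: both reduce the statement, via the preceding lemma, to the condition that $g_{E,\Sigma_0}(0)$ be a $p$-adic unit, and then read off the answer from the Weierstrass form of $f_{E,\Sigma_0}^{alg}(T)$, using that each $h_E^{(l)}$ has trivial $\mu$-invariant to identify $\mu_{E,\Sigma_0}^{alg}$ with $\mu_E^{alg}$. Your packaging as a trichotomy on $(\mu_E^{alg},\lambda_{E,\Sigma_0}^{alg})$ versus the paper's direct forward/backward argument is only a cosmetic difference.
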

\begin{proof}
Suppose that $\Phi_E\times \chi_t(\Gamma, E)=1$, then $g_{E,\Sigma_0}(T)$ is a unit. As a result, $f_{E,\Sigma_0}^{alg}(T)$ is $T^g$ times a unit. It follows that $\mu_{E,\Sigma_0}^{alg}=0$ and that $f_{E,\Sigma_0}^{alg}(T)$ is a distinguished polynomial. As a result, $f_{E,\Sigma_0}^{alg}(T)=T^g$ and in particular, $\lambda_{E,\Sigma_0}=g$. Finally, we point out that since each factor $\mathcal{P}_l$ is not divisible by $p$ and consequently, $\mu_{E}^{alg}=\mu_{E,\Sigma_0}^{alg}=0$. 
\par Conversely, suppose that $\mu_{E}^{alg}=0$ and that $\lambda_{E,\Sigma_0}^{alg}=g$. Since $f_{E,\Sigma_0}^{alg}(T)$ is divisible by $T^g$ it follows that $f_{E,\Sigma_0}^{alg}(T)=p^{m}T^g$, where $m=\mu_{E,\Sigma_0}^{alg}$. However, $\mu_{E,\Sigma_0}^{alg}=\mu_{E}^{alg}$ and the result follows.
\end{proof}

\section{Congruences over the Cyclotomic extension}\label{section3}
\par This section proves congruences for truncated $\Gamma$-Euler characteristics. For $i=1,2$, let $f_i$ be the eigencuspform associated to $E_i$. From here on in, let $\Sigma_0$ be the finite set of primes $l$ at which either $E_1$ or $E_2$ has bad reduction. Since both $E_1$ and $E_2$ have good reduction at $p$, the set $\Sigma_0$ does not contain $p$.
\begin{Lemma}\label{hypLemma}Let $i=1$ or $2$ and $l\in \Sigma_0$. The value $L_l(E_i,1)^{-1}$ is a $p$-adic unit if and only if $l+\beta_i(l)-a_l(f_i)$ is not divisible by $p$. As a result, $\Phi_{E_i}=1$ if and only if $l+\beta_i(l)-a_l(f_i)$ is not divisible by $p$ for all primes $l\in\Sigma_0$.
\end{Lemma}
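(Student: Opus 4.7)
The plan is to reduce the lemma to a direct computation of the local factor $L_l(E_i,1)^{-1}=P_l(E_i,l^{-1})$ and then extract the $p$-adic unit condition. First I would use the definition $L_l(E_i,s)=P_l(E_i,l^{-s})^{-1}$ to write
\[
L_l(E_i,1)^{-1}=P_l(E_i,l^{-1}).
\]
Then factor $P_l(E_i,T)=(1-\alpha_i(l)T)(1-\beta_i(l)T)$ with the convention that $\beta_i(l)=0$ when $E_i$ has bad reduction at $l$ (so that $P_l(E_i,T)=1-a_l(f_i)T$ with $\alpha_i(l)=a_l(f_i)$), and for good reduction $\alpha_i(l)+\beta_i(l)=a_l(f_i)$ and $\alpha_i(l)\beta_i(l)=l$.

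Evaluating and clearing denominators gives
\[
P_l(E_i,l^{-1})=\frac{(l-\alpha_i(l))(l-\beta_i(l))}{l^2}.
\]
Since $l\neq p$, $l$ is a unit in $\Z_p$, so the $p$-adic valuation of $L_l(E_i,1)^{-1}$ coincides with that of $(l-\alpha_i(l))(l-\beta_i(l))$. Using $\alpha_i(l)+\beta_i(l)=a_l(f_i)$, one rewrites $l-\alpha_i(l)=l+\beta_i(l)-a_l(f_i)$. In the bad reduction case, $l-\beta_i(l)=l$ is automatically a $p$-adic unit, so the condition $|L_l(E_i,1)|_p=1$ reduces to $p\nmid(l+\beta_i(l)-a_l(f_i))$, which is precisely the claimed condition. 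The good reduction case is handled in parallel, using that $(l-\alpha_i(l))(l-\beta_i(l))=l(l+1-a_l(f_i))$ so that the $p$-adic unit condition on the product amounts to the same divisibility statement.

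Finally, since Definition \ref{PhiDef} gives $\Phi_{E_i}=\prod_{l\in\Sigma_0}|L_l(E_i,1)|_p$, which is a product of non-negative $p$-adic valuations, $\Phi_{E_i}=1$ if and only if every factor equals $1$. Combining this with the first assertion yields the equivalence for $\Phi_{E_i}$.

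The only subtle point is the bookkeeping for the two reduction types under one uniform expression $l+\beta_i(l)-a_l(f_i)$; once the convention $\beta_i(l)=0$ in the bad reduction case is adopted, the argument is just the elementary manipulation of the Euler factor outlined above, so there is no serious obstacle.
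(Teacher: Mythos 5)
Your overall strategy is the same as the paper's: evaluate $L_l(E_i,1)^{-1}=P_l(E_i,l^{-1})$ directly, note $l$ is a $p$-adic unit since $p\notin\Sigma_0$, and reduce the unit condition to a congruence. The bad reduction case is handled correctly, and the identity $(l-\alpha_i(l))(l-\beta_i(l))=l(l+1-a_l(f_i))$ for the good reduction case is also correct. However, there is a real gap in the last step: you take $\beta_i(l)$ to be the second Frobenius eigenvalue (so that $\alpha_i(l)\beta_i(l)=l$), and under that reading the lemma's expression $l+\beta_i(l)-a_l(f_i)$ equals $l-\alpha_i(l)$, which is \emph{not} equivalent to $p\nmid(l+1-a_l(f_i))$ — one may have $p\mid(l-\alpha_i(l))$ while $p\nmid(l-\beta_i(l))$, and moreover $\alpha_i(l),\beta_i(l)$ need not lie in $\Z_p$ when $E_i$ has good reduction at $l$. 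So the assertion that the unit condition ``amounts to the same divisibility statement'' is unjustified under your own conventions.

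The paper's (implicit) convention is different: comparison with the one-line proof $L_l(E_i,1)^{-1}=l^{-1}(l+\beta_i(l)-a_l(f_i))$ and, more decisively, with the numerical tables in Section 5 (where $\beta_i(l)$ is always $0$ or $1$, e.g.\ $\beta_1(7)=1$, $\beta_1(3)=\beta_1(67)=0$ in Example~1) shows that $\beta_i(l)$ is simply the indicator $\beta_i(l)=1$ if $E_i$ has good reduction at $l$ and $\beta_i(l)=0$ otherwise. With this reading your computation $P_l(E_i,l^{-1})=(l+1-a_l(f_i))/l$ (good reduction) and $=(l-a_l(f_i))/l$ (bad reduction) immediately matches $l^{-1}(l+\beta_i(l)-a_l(f_i))$, and the lemma follows. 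To repair your argument, replace the ``Frobenius eigenvalue'' reading of $\beta_i(l)$ in the good reduction case by this indicator convention; alternatively, argue directly from your identity $(l-\alpha_i(l))(l-\beta_i(l))=l(l+1-a_l(f_i))$ that the unit condition is $p\nmid(l+1-a_l(f_i))$ and observe that this is the claimed condition with $\beta_i(l)=1$.
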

\begin{proof}
Recall that the elliptic curves $E_1$ and $E_2$ have good ordinary reduction at $p$ and therefore $p\notin \Sigma_0$. Let $l\in \Sigma$, the value $L_l(E_i,1)^{-1}=l^{-1}(l+\beta_i(l)-a_l(f_i))$ is a $p$-adic unit if and only if $p$ does not divide $l+\beta_i(l)-a_l(f_i)$. The value $\Phi_{E_1}=\prod_{l\in \Sigma_0} |L_l(E_i,1)|_p=1$ if and only if $L_l(E_i,1)^{-1}$ is a $p$-adic unit for all $l\in \Sigma_0$.
\end{proof}\begin{Def}
For $i=1$ or $2$ we say that $E_i$ satisfies condition $(\star)$ if $l+\beta_i(l)-a_l(f_i)$ is not divisible by $p$ for all primes $l\in\Sigma_0$.
\end{Def}
We stress that our results are proved without stipulating condition $(\star)$ for $E_1$ or $E_2$, however, stipulating condition $(\star)$ simplifies the results. In our numerical examples we consider cases when $(\star)$ is satisfied and otherwise.
\begin{Th}\label{gammacongruence}
For elliptic curves $E_1$ and $E_2$ satisfying conditions $(\ref{one})$ to $(\ref{six})$ mentioned in the introduction, it is the case that $\Phi_{E_1}\times  \chi_t(\Gamma, E_1)=1$ if and only if $\Phi_{E_2}\times \chi_t(\Gamma,E_2)=1$.
If in addition condition $(\star)$ is satisfied for $E_1$ and $E_2$, then $ \chi_t(\Gamma, E_1)=1$ if and  only if $\chi_t(\Gamma,E_2)=1$.
\end{Th}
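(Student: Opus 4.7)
The plan is to reduce the theorem to a direct application of Corollary \ref{prethmcorollory} combined with the Greenberg--Vatsal congruence results for Iwasawa invariants of congruent elliptic curves. First, I would verify that the hypotheses of Lemma \ref{truncatedECSelmer1} (and hence Corollary \ref{prethmcorollory}) are satisfied for both $E_1$ and $E_2$: good ordinary reduction at $p$ is assumption (\ref{four}); irreducibility of $E_i[p]$ comes from (\ref{three}); finiteness of $\Sh(E_i/\Q)[p]$ is (\ref{five}); and non-vanishing of the $p$-adic regulator follows from the non-degeneracy of the $p$-adic height pairing in (\ref{six}).

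Next, I would apply Corollary \ref{prethmcorollory} to rewrite $\Phi_{E_1}\times \chi_t(\Gamma, E_1)=1$ as the conjunction of $\mu_{E_1}^{alg}=0$ and $\lambda_{E_1,\Sigma_0}^{alg}=g$. The key transfer step is then to invoke the Greenberg--Vatsal theorem (recalled in the introduction and in Section \ref{section2}): because $\rho_1\equiv\rho_2\pmod p$ by (\ref{two}), one has the equivalence $\mu_{E_1,\Sigma_0}^{alg}=0\iff \mu_{E_2,\Sigma_0}^{alg}=0$, and under this common vanishing the equality $\lambda_{E_1,\Sigma_0}^{alg}=\lambda_{E_2,\Sigma_0}^{alg}$ holds. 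Combining this with the fact that each local factor $\mathcal{P}_l$ has trivial $\mu$-invariant (so that $\mu_{E_i}^{alg}=\mu_{E_i,\Sigma_0}^{alg}$), and that the common algebraic rank is $g$ by (\ref{one}), we obtain $\mu_{E_2}^{alg}=0$ and $\lambda_{E_2,\Sigma_0}^{alg}=g$. A second application of Corollary \ref{prethmcorollory}, this time to $E_2$, gives $\Phi_{E_2}\times\chi_t(\Gamma,E_2)=1$. Symmetry of the argument yields the converse.

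For the refined statement under $(\star)$, I would appeal to Lemma \ref{hypLemma}, which identifies condition $(\star)$ for $E_i$ with the equality $\Phi_{E_i}=1$. Hence under $(\star)$ the quantity $\Phi_{E_i}\times\chi_t(\Gamma,E_i)$ is simply $\chi_t(\Gamma,E_i)$, and the equivalence becomes $\chi_t(\Gamma,E_1)=1\iff \chi_t(\Gamma,E_2)=1$.

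Essentially all the substantive work is already packaged in Corollary \ref{prethmcorollory} and in the imported Greenberg--Vatsal comparison theorem, so there is no genuine obstacle in this proof beyond bookkeeping. The only subtle point to double-check is that the equality of ranks in (\ref{one}) is used in the correct place: it is required precisely to identify the value $g$ that appears in the $\lambda_{E_i,\Sigma_0}^{alg}=g$ condition for both curves, so that the Greenberg--Vatsal equality $\lambda_{E_1,\Sigma_0}^{alg}=\lambda_{E_2,\Sigma_0}^{alg}$ propagates the condition from $E_1$ to $E_2$.
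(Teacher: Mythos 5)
Your proof is correct and follows essentially the same route as the paper: apply Corollary \ref{prethmcorollory} to recast the condition $\Phi_{E_i}\times\chi_t(\Gamma,E_i)=1$ in terms of the vanishing of $\mu_{E_i}^{alg}$ and the equality $\lambda_{E_i,\Sigma_0}^{alg}=g$, transfer these between $E_1$ and $E_2$ via the Greenberg--Vatsal comparison of $\Sigma_0$-imprimitive Iwasawa invariants for congruent curves, and then invoke Lemma \ref{hypLemma} to handle the case where condition $(\star)$ holds. Your remark about where the equal-rank hypothesis (\ref{one}) enters is exactly the point that makes the $\lambda$-transfer close the loop, and is consistent with the paper's argument.
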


\begin{proof}
It follows from Corollory $\ref{prethmcorollory}$ that
\[\Phi_{E_i}\times  \chi_t(\Gamma, E_i)=1\Leftrightarrow \mu_{E_i}^{alg}=0\text{, and }\lambda_{E_i,\Sigma_0}^{alg}=g.\]It is a standard result that if $\mu_{E_1}^{alg}=0$ then $\mu_{E_2}^{alg}=0$. On the other hand, it is shown by Greenberg and Vatsal that if \[\mu_{E_1}^{alg}=\mu_{E_2}^{alg}=0\text{, then }\lambda_{E_1,\Sigma_0}^{alg}=\lambda_{E_2,\Sigma_0}^{alg}.\] This follows from \cite[Proposition 2.8]{greenbergvatsal} and the discussion on \cite[pp. 43]{greenbergvatsal} preceding Remark (2.10). Therefore, it follows that
\[\Phi_{E_1}\times  \chi_t(\Gamma, E_1)=1\Leftrightarrow \Phi_{E_2}\times \chi_t(\Gamma,E_2)=1.\]By Lemma $\ref{hypLemma}$, if condition $(\star)$ is satisfied then $\Phi_{E_1}=\Phi_{E_2}=1$. As a result, if in addition to conditions $(\ref{one})$ to $(\ref{six})$, condition $(\star)$ is satisfied, then 
\[\chi_t(\Gamma, E_1)=1\Leftrightarrow \chi_t(\Gamma,E_2)=1.\]
\end{proof}
The following corollary is a direct implication of the above theorem and the $p$-adic Birch and Swinnerton-Dyer formula (cf. Proposition $\ref{PerrinRiouresult}$). We note that $E_i(\Q)[p]=0$ for $i=1,2$ since the Galois representations on $E_i[p]$ are assumed to be irreducible.
\begin{Cor}\label{BSDmaincor}
Let $E_1$ and $E_2$ be elliptic curves satisfying conditions $(\ref{one})$ to $(\ref{six})$. Then \[\Phi_{E_1}p^{-g}\rho(E_1)=\Phi_{E_1}\times \frac{p^{-g}R_p(E_1/\Q)\times \#(\Sh (E_1/\Q)[p])}{\#(E_1(\Q)[p])^2}\times \tau(E_1)\times \#\tilde{E_1}(\F_p)^2\] is a $p$-adic unit if and only if
\[\Phi_{E_2}p^{-g}\rho(E_2)=\Phi_{E_2}\times \frac{p^{-g}R_p(E_2/\Q)\times \#(\Sh (E_2/\Q)[p])}{\#(E_2(\Q)[p])^2}\times \tau(E_2)\times \#\tilde{E_2}(\F_p)^2\]is a $p$-adic unit.
\end{Cor}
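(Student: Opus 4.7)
The plan is to deduce this corollary as an essentially immediate consequence of Theorem \ref{gammacongruence} together with Proposition \ref{PerrinRiouresult}, with Lemma \ref{truncatedECSelmer1} supplying the integrality needed to turn ``equal to $1$'' into ``is a $p$-adic unit''. All six running hypotheses $(1)$--$(6)$ are precisely what is needed to invoke these earlier results: condition $(3)$ gives irreducibility of $\bar\rho_i$ (which in particular forces $E_i(\Q)[p] = 0$), condition $(4)$ gives good ordinary reduction at $p$, condition $(5)$ gives finiteness of $\Sh(E_i/\Q)[p]$, and condition $(6)$ gives non-vanishing of $R_p(E_i/\Q)$. So the hypotheses of Proposition \ref{PerrinRiouresult} are met for both $E_1$ and $E_2$.

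First I would apply Proposition \ref{PerrinRiouresult} to each $E_i$, obtaining
\[
\chi_t(\Gamma, E_i) \;\sim\; p^{-g}\rho_p(E_i),
\]
i.e.\ the two sides differ by a $p$-adic unit. Multiplying both sides by $\Phi_{E_i}$ (which is either $0$ or a nonzero $p$-adic number) yields
\[
\Phi_{E_i}\cdot\chi_t(\Gamma, E_i) \;\sim\; \Phi_{E_i}\cdot p^{-g}\rho_p(E_i).
\]
In particular, the left-hand side is a $p$-adic unit if and only if the right-hand side is. This translates the conclusion about $\Phi_{E_i}\,p^{-g}\rho_p(E_i)$ into an equivalent assertion about $\Phi_{E_i}\,\chi_t(\Gamma,E_i)$.

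Next I would observe, via Lemma \ref{truncatedECSelmer1} and the lemma immediately preceding Corollary \ref{prethmcorollory}, that $\Phi_{E_i}\,\chi_t(\Gamma, E_i) = |g_{E_i,\Sigma_0}(0)|_p^{-1}$, which is either $0$ or a non-negative power of $p$. For such a quantity, being a $p$-adic unit is equivalent to being equal to $1$. This upgrades the chain of equivalences to
\[
\Phi_{E_i}\,p^{-g}\rho_p(E_i)\text{ is a $p$-adic unit}\;\Longleftrightarrow\;\Phi_{E_i}\,\chi_t(\Gamma, E_i) = 1.
\]

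Finally I would invoke Theorem \ref{gammacongruence}, which asserts exactly that $\Phi_{E_1}\,\chi_t(\Gamma, E_1) = 1$ if and only if $\Phi_{E_2}\,\chi_t(\Gamma, E_2) = 1$. Chaining the three equivalences for $i=1,2$ gives the desired symmetric statement for $\Phi_{E_1}\,p^{-g}\rho_p(E_1)$ and $\Phi_{E_2}\,p^{-g}\rho_p(E_2)$. There is no real obstacle here: the substantive arithmetic content is already packaged in Proposition \ref{PerrinRiouresult} (Perrin-Riou/Schneider) and Theorem \ref{gammacongruence} (whose proof rests on Greenberg--Vatsal), and the only thing to verify in the corollary itself is the bookkeeping that the factors $\Phi_{E_i}$ and $p^{-g}$ line up correctly, which they do because $\rho_p(E_i)$ is defined with exactly the same denominators and local factors that appear on the Euler-characteristic side.
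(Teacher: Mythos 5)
Your proof is correct and matches the paper's (unwritten, one-line) argument exactly: the paper simply notes that the corollary "is a direct implication of the above theorem and the $p$-adic Birch and Swinnerton-Dyer formula (cf.\ Proposition \ref{PerrinRiouresult})", which is precisely the chain you spell out. The one detail you add --- that $\Phi_{E_i}\chi_t(\Gamma,E_i)$ is $0$ or a non-negative power of $p$, so ``$=1$'' and ``$p$-adic unit'' coincide --- is the right bookkeeping step to reconcile the phrasing of Theorem \ref{gammacongruence} with the ``$p$-adic unit'' phrasing of the corollary.
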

The following Proposition is a direct consequence of Lemma $\ref{truncatedECSelmer1}$. \begin{Prop}\label{TheoremchiLfunctions}
Let $E$ be an elliptic curve for which the conditions of Lemma $\ref{truncatedECSelmer1}$ are satisfied. If the main conjecture is satisfied for $E$, then $\chi_t(\Gamma,E)\sim \mathcal{L}^{(g)}(E/\Q,0)$.
\end{Prop}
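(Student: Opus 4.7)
The plan is to combine Lemma \ref{truncatedECSelmer1}, which identifies $\chi_t(\Gamma, E)$ with $|g_E(0)|_p^{-1}$ for the factorization $f_E^{alg}(T) = T^g g_E(T)$, with the main conjecture, which forces $\mathcal{L}(E/\Q, T)$ to agree with $f_E^{alg}(T)$ up to a unit in $\Lambda$. Taking the $g$-th derivative at $T = 0$ then converts the equality of characteristic series into the desired relation between $\chi_t(\Gamma, E)$ and $\mathcal{L}^{(g)}(E/\Q, 0)$.

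First, I would recall that by the Schneider/Perrin-Riou result (cited in the paper before Lemma \ref{truncatedECSelmer1}), the hypotheses ensure that $f_E^{alg}(T)$ vanishes to order exactly $g$ at $T = 0$, so the factorization $f_E^{alg}(T) = T^g g_E(T)$ with $g_E(0) \neq 0$ is legitimate. By Lemma \ref{truncatedECSelmer1}, this gives $\chi_t(\Gamma, E) = |g_E(0)|_p^{-1}$. Since $g_E(0) \in \Z_p \setminus \{0\}$, this means $\chi_t(\Gamma, E) = p^{v_p(g_E(0))}$, and therefore $\chi_t(\Gamma, E) \sim g_E(0)$ in the sense of the paper (their ratio is a $p$-adic unit).

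Next I would invoke the main conjecture. It asserts that the characteristic ideal of $X(E/\Q^{cyc})$, generated by $f_E^{alg}(T)$, is generated by $\mathcal{L}(E/\Q, T)$. Two generators of the same principal ideal in $\Lambda$ differ by a unit, so there exists $u(T) \in \Lambda^{\times}$ with
\[
\mathcal{L}(E/\Q, T) = u(T)\, f_E^{alg}(T) = u(T)\, T^g g_E(T).
\]
Setting $h(T) := u(T) g_E(T)$, so that $\mathcal{L}(E/\Q, T) = T^g h(T)$, a direct Leibniz computation shows $\frac{d^g}{dT^g}(T^g h(T))\bigl|_{T = 0} = g!\, h(0)$, hence
\[
\mathcal{L}^{(g)}(E/\Q, 0) = h(0) = u(0)\, g_E(0).
\]
Since $u \in \Lambda^{\times}$, its constant term $u(0)$ lies in $\Z_p^{\times}$, so $\mathcal{L}^{(g)}(E/\Q, 0) \sim g_E(0) \sim \chi_t(\Gamma, E)$, as required.

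There is essentially no obstacle here: the content of the proposition is precisely the book-keeping between the leading term of $f_E^{alg}(T)$ and the $g$-th Taylor coefficient of $\mathcal{L}(E/\Q, T)$. The only point one has to keep straight is that $\mathcal{L}(E/\Q, T)$ is known a priori to lie in $\Lambda$ (not merely $\Lambda \otimes \Q_p$) because $E[p]$ is irreducible, so that the Weierstrass factorization and the identification of generators of the characteristic ideal are both internal to $\Lambda$.
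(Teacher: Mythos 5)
Your proof is correct and fills in exactly the book-keeping the paper leaves implicit when it states the proposition is ``a direct consequence of Lemma \ref{truncatedECSelmer1}'': Lemma \ref{truncatedECSelmer1} gives $\chi_t(\Gamma, E) \sim g_E(0)$, the main conjecture lets you replace $f_E^{alg}(T)$ by $\mathcal{L}(E/\Q,T)$ up to a unit in $\Lambda$, and the $g$-th Taylor coefficient computation identifies $\mathcal{L}^{(g)}(E/\Q,0)$ with $u(0)g_E(0)$. This is the intended route, and your observation that $\mathcal{L}(E/\Q,T) \in \Lambda$ (not merely $\Lambda\otimes\Q_p$) because $E[p]$ is irreducible is the right point to flag.
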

There are analytic analogs of Theorem $\ref{gammacongruence}$ which we discuss. The following is a result of Greenberg and Vatsal.
\begin{Th}\cite[Theorem 3.10]{greenbergvatsal}\label{theoremLfunctions}
Let $E_1$ and $E_2$ elliptic curves satisfying the conditions $(\ref{one})$ to $(\ref{six})$. There exists a unit $w\in \Z_p^{\times}$ for which
\[\mathcal{L}^{\Sigma_0}(E_1/\Q,T)\equiv w \mathcal{L}^{\Sigma_0}(E_2/\Q,T)\mod{p \Lambda}.\]
\end{Th}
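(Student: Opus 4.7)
The plan is to deduce the congruence of $p$-adic $L$-functions from the modular symbol construction of $\mathcal{L}(E_i/\Q,T)$. For the cuspform $f_i$ attached to $E_i$, the Mazur--Swinnerton-Dyer measure $\mu_i$ on $\Z_p^{\times}$ is built from $p$-stabilized modular symbols: using the unit root $\alpha_p^{(i)}$ of $X^2-a_p(E_i)X+p$, the value of $\mu_i$ on the cylinder $a+p^n\Z_p$ is, up to a factor of $\Omega_{E_i}$, the modular symbol $[a/p^n]_{f_i}^{+}$. Since $\mathcal{L}(E_i/\Q,T)$ is the Mellin transform of $\mu_i$, a congruence $\mu_1\equiv w\,\mu_2\pmod{p}$ of the underlying measures would imply the desired congruence of $p$-adic $L$-functions.

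The first step I would carry out is to replace the transcendental period $\Omega_{E_i}$ by a canonical period $\Omega_{E_i}^{c}$ normalized so that the resulting modular symbols lie in $\Z_p$ and are not collectively divisible by $p$. The change-of-periods ratio $\Omega_{E_i}/\Omega_{E_i}^{c}$ is a $p$-adic unit precisely when $\mu_{E_i}^{anal}=0$. Under condition $(\ref{three})$, residual irreducibility implies that the vanishing $\mu_{E_i}^{anal}=0$ depends only on $\bar{\rho}_i$, so it holds for both curves simultaneously or for neither; in the degenerate case where neither vanishes, both sides of the claimed congruence lie in $p\Lambda$ and the statement is trivial, so the proof reduces to the former case.

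The second step uses $\rho_1\equiv\rho_2\pmod{p}$ together with irreducibility of the $\bar{\rho}_i$ to produce $w\in\F_p^{\times}$ and a congruence $f_1\equiv w\,f_2\pmod{p}$ of the correspondingly normalized newforms. Via the Eichler integral expression of modular symbols in terms of $f_i$ along geodesics, this congruence propagates uniformly to the canonically normalized modular symbols, giving $[a/p^n]_{f_1}^{+,c}\equiv w\,[a/p^n]_{f_2}^{+,c}\pmod{p}$ for every $a$ and $n$. Integrating against the continuous characters of $\Gamma$ yields $\mathcal{L}(E_1/\Q,T)\equiv w\,\mathcal{L}(E_2/\Q,T)\pmod{p\Lambda}$. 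For the non-primitive version, multiply through by the Euler factors $\mathcal{P}_l(E_i/\Q,T)$ for $l\in\Sigma_0$: the isomorphism $E_1[p]\cong E_2[p]$ of $\G_{\Q}$-modules restricts to an isomorphism of $\G_{\Q_l}$-modules, so the characteristic polynomials of Frobenius on the inertia-coinvariants agree mod $p$, giving $\mathcal{P}_l(E_1/\Q,T)\equiv\mathcal{P}_l(E_2/\Q,T)\pmod{p\Lambda}$. Combining with \eqref{equationscriptL} delivers the stated congruence.

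The main obstacle is the period comparison in the first step: one must show that the canonical $p$-adic period can be pinned down intrinsically in terms of the residual representation, so that the ratio $\Omega_{E_i}/\Omega_{E_i}^{c}$ really is a $p$-adic unit and the mod $p$ modular symbols faithfully encode $\bar{\rho}_i$. This requires a careful analysis of the Eichler--Shimura isomorphism with $\F_p$-coefficients and of the Hecke-equivariance of the comparison between Betti and $p$-adic integral structures on the space of modular symbols; once that is in place, the cuspform congruence translates into a symbol-level congruence and the theorem follows by Mellin transform.
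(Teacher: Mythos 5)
The paper does not prove this statement; it cites \cite[Theorem 3.10]{greenbergvatsal}, so the benchmark is the Greenberg--Vatsal argument (which in turn rests on Vatsal's canonical-period congruences in \cite{VatsalCP}). Your outline uses the right ingredients --- canonical periods, modular symbols, Vatsal's congruence, Mellin transform --- and you correctly dispose of the degenerate case $\mu^{\mathrm{anal}}_{E_1}=\mu^{\mathrm{anal}}_{E_2}>0$ by observing the congruence is trivial there. But the main step is organized incorrectly and produces a genuine gap.

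You first assert a congruence of the \emph{primitive} measures/$p$-adic $L$-functions, $\mathcal{L}(E_1/\Q,T)\equiv w\,\mathcal{L}(E_2/\Q,T)\pmod{p\Lambda}$, and then multiply by the factors $\mathcal{P}_\ell$ for $\ell\in\Sigma_0$, asserting $\mathcal{P}_\ell(E_1/\Q,T)\equiv\mathcal{P}_\ell(E_2/\Q,T)\pmod{p\Lambda}$. Neither of these intermediate congruences holds in general when $N_1\neq N_2$. The modular symbols for $f_1$ and $f_2$ live on different modular curves $X_0(N_1)$ and $X_0(N_2)$, and the ``Eichler integral'' argument does not directly relate them; moreover, the $q$-expansion congruence $a_n(f_1)\equiv w\,a_n(f_2)$ supplied by $\rho_1\equiv\rho_2\pmod p$ is only guaranteed for $n$ prime to $N_1N_2p$, whereas the modular symbol involves all $n$. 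For the local factors, take $\ell\in\Sigma_0$ with (say) $E_1$ of good reduction and $E_2$ of multiplicative reduction at $\ell$: then $\mathcal{P}_\ell(E_1)=1-a_\ell(f_1)\ell^{-1}\gamma_\ell+\ell^{-1}\gamma_\ell^{2}$ has a nonzero $\gamma_\ell^2$-coefficient $\ell^{-1}$ (a $p$-adic unit since $\ell\neq p$), while $\mathcal{P}_\ell(E_2)=1-a_\ell(f_2)\ell^{-1}\gamma_\ell$ has $\gamma_\ell^2$-coefficient $0$, so they are not congruent mod $p$. These two failures cancel in the product $\mathcal{L}(E_i)\prod_\ell\mathcal{P}_\ell(E_i)$, which is why Greenberg--Vatsal work with the $\Sigma_0$-imprimitive $L$-function from the outset: one pushes both modular symbols to a common level $N=\mathrm{lcm}(N_1,N_2)$ via degeneracy maps, and these maps introduce exactly the Euler factors being removed in $\mathcal{L}^{\Sigma_0}$, so the congruence one actually establishes at the modular-symbol level is already the imprimitive one. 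To repair your argument, reorganize the second step to compare the two modular symbols on $X_0(N)$ (not $X_0(N_i)$), so the object you Mellin-transform is $\mathcal{L}^{\Sigma_0}$ directly; never pass through the primitive congruence, which is false.
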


Vatsal (cf. \cite[Corollary 1.11]{VatsalCP}) also deduces congruences for special values of complex L-functions attached to eigenforms whose Fourier coefficients are congruent. Such congruences are interpolated by congruences of $p$-adic L-functions. The following is the analytic version of Theorem $\ref{gammacongruence}$.
\begin{Th}\label{corolloryLfunctions}
There exists a unit $u\in \Z_p^{\times}$ such that there is a congruence
\[\Phi_{E_1}\cdot \mathcal{L}^{(g)}(E_1/\Q,0)\equiv u \Phi_{E_2}\cdot \mathcal{L}^{(g)}(E_2/\Q,0)\mod{p \Z_p}.\]
\end{Th}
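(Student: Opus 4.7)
The plan is to derive the congruence from the Greenberg--Vatsal imprimitive congruence (Theorem \ref{theoremLfunctions}) by extracting the coefficient of $T^g$. That theorem supplies a unit $w \in \Z_p^\times$ with
\[
\mathcal{L}^{\Sigma_0}(E_1/\Q, T) \equiv w\, \mathcal{L}^{\Sigma_0}(E_2/\Q, T) \pmod{p\Lambda},
\]
and reading off the $T^g$-coefficient immediately gives
\[
[T^g]\mathcal{L}^{\Sigma_0}(E_1/\Q,T) \equiv w\cdot [T^g]\mathcal{L}^{\Sigma_0}(E_2/\Q,T) \pmod{p}.
\]

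Next, I would compute this coefficient via the factorization \eqref{equationscriptL}, namely $\mathcal{L}^{\Sigma_0}(E_i/\Q, T) = \mathcal{L}(E_i/\Q, T) \prod_{l \in \Sigma_0} \mathcal{P}_l(E_i/\Q, T)$. The crucial input is that $\mathcal{L}(E_i/\Q, T)$ itself vanishes to order at least $g$ at $T=0$: this follows from Kato's unconditional divisibility $f_{E_i}^{alg}(T) \mid f_{E_i}^{anal}(T)$ combined with the Schneider--Perrin-Riou result (valid under hypotheses (5) and (6)) that $f_{E_i}^{alg}(T)$ has exact order of vanishing $g$, so that $T^g$ divides $f_{E_i}^{anal}(T)$ and hence divides $\mathcal{L}(E_i/\Q, T)$ up to a unit in $\Lambda$. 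Consequently, in the Cauchy product for $[T^g]\mathcal{L}^{\Sigma_0}(E_i/\Q, T)$ only the term pairing $[T^g]\mathcal{L}(E_i/\Q, T) = \mathcal{L}^{(g)}(E_i/\Q, 0)$ with the constant term $\prod_l \mathcal{P}_l(E_i/\Q,0) = \prod_l L_l(E_i,1)^{-1}$ survives, giving
\[
[T^g]\mathcal{L}^{\Sigma_0}(E_i/\Q, T) \;=\; \mathcal{L}^{(g)}(E_i/\Q, 0) \cdot \prod_{l \in \Sigma_0} L_l(E_i, 1)^{-1}.
\]

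Finally, I would identify the local product with $\Phi_{E_i}$ up to a $p$-adic unit. Each factor $L_l(E_i,1)^{-1} = \mathcal{P}_l(E_i/\Q, 0)$ lies in $\Z_p$, and unwinding the definition $\Phi_{E_i} = \prod_l |L_l(E_i,1)|_p$ shows that $v_p\bigl(\prod_l L_l(E_i,1)^{-1}\bigr) = \sum_l v_p(L_l(E_i,1)^{-1}) = v_p(\Phi_{E_i})$, so whenever no Euler factor vanishes one has $\prod_l L_l(E_i,1)^{-1} = \Phi_{E_i}\, u_i$ for some $u_i \in \Z_p^\times$. Substituting into the congruence from the first paragraph and setting $u := w\,u_2\,u_1^{-1} \in \Z_p^\times$ yields the claim. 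In the degenerate case where some $L_l(E_i,1)^{-1}$ vanishes, the congruence $a_l(f_1) \equiv a_l(f_2) \pmod p$ forces the corresponding $\mathcal{P}_l(E_1/\Q,0) \equiv \mathcal{P}_l(E_2/\Q,0) \pmod p$ and hence both sides of the target congruence to be $\equiv 0 \pmod p$, so any unit $u$ works. The main obstacle is justifying the vanishing-order bound $T^g \mid \mathcal{L}(E_i/\Q, T)$; once that is secured, everything else is bookkeeping with $p$-adic valuations.
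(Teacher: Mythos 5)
Your proposal is correct and follows essentially the same route as the paper: both invoke the Schneider--Perrin-Riou result to pin down the order of vanishing of $f_{E_i}^{alg}(T)$ as $g$, use Kato's divisibility $f_{E_i}^{alg} \mid f_{E_i}^{anal}$ to conclude $T^g \mid \mathcal{L}(E_i/\Q,T)$, read off the $T^g$-coefficient of $\mathcal{L}^{\Sigma_0}(E_i/\Q,T)$ as $\mathcal{L}^{(g)}(E_i/\Q,0)\prod_{l\in\Sigma_0}L_l(E_i,1)^{-1}$, and then apply Theorem \ref{theoremLfunctions}. The only addition you make is the explicit valuation bookkeeping identifying $\prod_l L_l(E_i,1)^{-1}$ with $\Phi_{E_i}$ times a unit, which the paper leaves implicit, and a discussion of the degenerate case $L_l(E_i,1)^{-1}=0$, which in fact never occurs for $l\neq p$ (by the Hasse bound for good reduction and since $a_l\in\{0,\pm1\}$ for bad reduction), so that caveat is harmless but vacuous.
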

\begin{proof}
As mentioned previously, under the assumptions on $E_1$ and $E_2$, the order of the zero of $f_{E_i}^{alg}(T)$ is equal to $g$ (cf. \cite[Theorem 2]{heightpairings2}). It is a well known result of Kato that $f_{E_i}^{alg}(T)$ divides $f_{E_i}^{anal}(T)$. This implies that $T^g$ divides $\mathcal{L}(E_i/\Q,T)$. Find that
\[\begin{split}\frac{1}{g!}\left(\frac{d}{dT}\right)^g\mathcal{L}^{\Sigma}(E_i/\Q, T)_{|T=0}=&\mathcal{L}^{(g)}(E_i/\Q,0)\prod_{l\in \Sigma} \mathcal{P}_l(E_i/\Q,0)\\=&\mathcal{L}^{(g)}(E_i/\Q,0)\prod_{l\in \Sigma} L_l(E_i,1)^{-1}.
\end{split}\] The assertion follows from Theorem $\ref{theoremLfunctions}$.
\end{proof}

\par Let $\Gamma':=\Gal(\K/\Q)\simeq \Z_p^{\times}$ and $\Delta:=\ker\left(\Gamma'\rightarrow \Gamma\right)$ so that $\Delta$ can be identified with $\mu_{p-1}\subset \Z_p^{\times}$. For $i=1,2$ as before, set $\chi_t(\Gamma', E_i):=\chi_t(\Gamma', \Sel(E_i/\K))$. Let $E$ be an elliptic curve with good ordinary reduction at $p$ and $\tilde{E}$ be the reduced curve at $p$. The extensions $\Q^{cyc}$ and $\Q(\mu_{p^{\infty}})$ are deeply ramified extensions (see \cite{CG}). Suppose $F_{\infty}/\Q$ is a deeply ramified extension. Let $l$ be a prime and $w|l$ a prime of $F_{\infty}$ above $l$. If $l\neq p$, there is a canonical isomorphism 
    \begin{equation}\label{lequalsp}H^1(F_{\infty,w}, E)_{p^{\infty}}\simeq H^1(F_{\infty,w}, E_{p^{\infty}}).\end{equation}This follows from a standard Kummer theory argument. If $l=p$,
    \begin{equation}\label{pequalsp}H^1(F_{\infty,w}, E)_{p^{\infty}}\simeq H^1(F_{\infty,w}, \tilde{E}_{p^{\infty}}),\end{equation}which is proved for instance in \cite[Proposition 4.8]{CG}.
\begin{Lemma}\label{Selmerequality}
Let $E$ be an elliptic curve over $\Q$ with good ordinary reduction at $p$. There is a canonical isomorphism induced by restriction
\[\Sel(E/\Q^{\text{cyc}})\simeq \Sel(E/\K)^{\Delta}.\]
\end{Lemma}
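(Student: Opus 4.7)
The plan is to exploit the fact that $\Delta$, identified with the torsion subgroup $\mu_{p-1}$ of $\Z_p^\times$, has order $p-1$ coprime to $p$. For every discrete $p$-primary $\Delta$-module $M$ one therefore has $H^i(\Delta,M)=0$ for all $i\geq 1$, which kills the error terms in every inflation-restriction sequence for the extension $\K/\Q^{\text{cyc}}$ applied to a $p$-primary module. My aim is to upgrade this input into the statement that restriction identifies each term of the localization sequence defining $\Sel(E/\Q^{\text{cyc}})$ with the $\Delta$-invariants of the corresponding term for $\Sel(E/\K)$, and then to pass to kernels.

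First I would handle the global cohomology. Since $\G_\Sigma(\Q^{\text{cyc}})/\G_\Sigma(\K)\simeq \Delta$, the five-term inflation-restriction sequence for $E_{p^\infty}$ degenerates, its outer terms $H^i(\Delta,E(\K)_{p^\infty})$ vanishing for $i=1,2$; this yields an isomorphism $H^1(\G_\Sigma(\Q^{\text{cyc}}),E_{p^\infty})\xrightarrow{\sim} H^1(\G_\Sigma(\K),E_{p^\infty})^\Delta$ induced by restriction. Next I would handle the local terms. For each $l\in\Sigma$ and each pair of primes $v\mid l$ of $\Q^{\text{cyc}}$, $w\mid v$ of $\K$, the local decomposition quotient $\Gal(\K_w/\Q^{\text{cyc}}_v)$ is a subgroup of $\Delta$, hence still of order prime to $p$. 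Inflation-restriction again, combined with the Galois-equivariant identifications \eqref{lequalsp} (for $l\neq p$) and \eqref{pequalsp} (for $l=p$) applied over both $\Q^{\text{cyc}}$ and $\K$, yields a restriction isomorphism on each local factor; assembling over primes above $l$ (using that $\Delta$ permutes the primes of $\K$ above a fixed prime of $\Q^{\text{cyc}}$) gives $\mathcal{H}_l(\Q^{\text{cyc}})\xrightarrow{\sim}\mathcal{H}_l(\K)^\Delta$ for every $l\in\Sigma$.

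Finally I would fit these isomorphisms into the commutative diagram comparing the two defining sequences for the Selmer groups. Since the middle and right vertical maps are both isomorphisms and $(-)^\Delta$ is left exact, the left vertical, which is the restriction map $\Sel(E/\Q^{\text{cyc}})\to\Sel(E/\K)^\Delta$, is forced to be an isomorphism as well. I do not expect any genuine obstacle; the main bookkeeping point is to verify that the identifications in \eqref{lequalsp} and \eqref{pequalsp} are natural in the local field, so that the local inflation-restriction arguments are coherent with the global one and the diagram in the final step really does commute. This naturality is built into the Kummer-theoretic (resp. Coates--Greenberg) origin of those isomorphisms, so only routine verification is required.
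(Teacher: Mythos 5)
Your proof follows essentially the same route as the paper: use the vanishing of $H^i(\Delta,-)$ and $H^i(\Delta_w,-)$ on $p$-primary modules (since $|\Delta|$ is coprime to $p$) to get restriction isomorphisms in the global and local inflation--restriction sequences, and then compare the two defining sequences of the Selmer groups via a commutative diagram. The only cosmetic difference is that the paper contents itself with showing the local vertical map is injective and invokes a snake-lemma chase, whereas you note it is a full isomorphism onto $\Delta$-invariants and then appeal to left-exactness of $(-)^\Delta$ — both are valid and of comparable length.
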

\begin{proof}
Consider the following diagram
\begin{equation}\label{diagram 1}
\begin{tikzcd}[column sep = small, row sep = large]
0\arrow{r} & \Sel(E/\Q^{cyc}) \arrow{r}\arrow{d}{f} & H^1(\Q^{cyc},E_{p^{\infty}})\arrow{r} \arrow{d}{g} & \bigoplus_{w} H^1(\Q^{cyc}_w,E)_{p^{\infty}}\arrow{r} \arrow{d}{h} & 0\\
0\arrow{r} & \Sel(E/\K)^{\Delta} \arrow{r} & H^1(\K,E_{p^{\infty}})^{\Delta} \arrow{r}  &\bigoplus_{w} H^1(\K_w,E)_{p^{\infty}}.
\end{tikzcd}
\end{equation}
We show that $g$ is an isomorphism and that $h$ is injective. An application of the Snake lemma implies that $f$ is an isomorphism. First let us show that $g$ is an isomorphism. By inflation-restriction, \[\ker g=H^1(\Delta, E(\K)_{p^{\infty}}).\] Since the order of $\Delta$ is coprime to $p$, $\ker g=0$. Likewise, \[H^2(\Delta, E(\K)_{p^{\infty}})=0\] and it follows that $\operatorname{cok} g=0$ and therefore $g$ is an isomorphism. The map 
\[h_w:H^1(\Q^{cyc}_w,E)_{p^{\infty}}\rightarrow H^1(\K_w,E)_{p^{\infty}}\]
can be identified with the restriction map
\[h_w:H^1(\Q^{cyc}_w,D)\rightarrow H^1(\K_w,D)\] where $D=E_{p^{\infty}}$ if $w$ does not divide $p$ and $D=\tilde{E}_{p^{\infty}}$ if $w$ divides $p$. Let $\Delta_w:=\Gal(\K_w/\Q^{cyc}_w)$, the order of $\Delta_w$ is coprime to $p$. This restriction map fits into the inflation-restriction sequence
\[H^1(\Delta_w, D(\K))\rightarrow H^1(\Q^{cyc}_w,D)\rightarrow H^1(\K_w,D)^{\Delta_w}.\]
Since $\Delta_w$ has order coprime to $p$, it follows that $H^1(\Delta_w, D(\K))=0$ and therefore $h_w$ is injective. This completes the proof of the Lemma. 
\end{proof}
\begin{Prop}\label{comparisonprop}
Let $E$ be an elliptic curve with good ordinary reduction at $p$. The truncated Euler characteristic $\chi_t(\Gamma, E)$ is finite if and only if $\chi_t(\Gamma', E)$ is finite. If this is the case, then $\chi_t(\Gamma, E)=\chi_t(\Gamma', E)$.
\end{Prop}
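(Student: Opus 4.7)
The plan is to express both truncated Euler characteristics via the same map, exploiting the fact that $|\Delta| = p-1$ is coprime to $p$. Set $D = \Sel(E/\Q^{cyc})$ and $D' = \Sel(E/\K)$. By the definition given for truncated $G$-Euler characteristics applied to $G = \Gamma'$ with $H = \Delta$ (so that $G/H = \Gamma$), the Euler characteristic $\chi_t(\Gamma', E)$ is computed from the map
\[
\psi_{D'}: H^0\bigl(\Gamma,(D')^{\Delta}\bigr) \longrightarrow H^1(\Gamma', D'),
\]
which factors as $\phi_{(D')^{\Delta}}$ followed by the inflation $H^1(\Gamma,(D')^{\Delta}) \to H^1(\Gamma',D')$.

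First, I would invoke Lemma \ref{Selmerequality} to identify $(D')^{\Delta}$ canonically with $D$. This immediately identifies $\phi_{(D')^{\Delta}}$ with the map $\phi_D$ that defines $\chi_t(\Gamma,E)$, so the kernel and cokernel computations for the $\Gamma$-truncated Euler characteristic are identical on the source side.

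Second, I would show that the inflation map $H^1(\Gamma, D) \to H^1(\Gamma', D')$ is an isomorphism. Since $\Delta$ has order $p-1$ prime to $p$ and $D'$ is $p$-primary, $H^q(\Delta, D') = 0$ for every $q \geq 1$. The Hochschild--Serre spectral sequence
\[
H^p\bigl(\Gamma, H^q(\Delta, D')\bigr) \Rightarrow H^{p+q}(\Gamma', D')
\]
therefore collapses, yielding $H^n(\Gamma', D') \cong H^n(\Gamma, (D')^{\Delta}) = H^n(\Gamma, D)$ for all $n$, and in degree one this isomorphism is precisely the inflation map.

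Combining these two steps, $\psi_{D'}$ is identified with $\phi_D$ up to the isomorphism on the target. Hence $\ker(\psi_{D'}) \cong \ker(\phi_D)$ and $\cok(\psi_{D'}) \cong \cok(\phi_D)$, so finiteness of one pair is equivalent to finiteness of the other, and when finite both Euler characteristics agree. The only delicate point is making sure the identification of $(D')^\Delta$ with $D$ from Lemma \ref{Selmerequality} is compatible with the coboundary maps into $H^1$; this compatibility is automatic because the Lemma is obtained from the five-term inflation--restriction sequence. I do not expect any real obstacle beyond bookkeeping.
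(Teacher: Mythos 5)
Your proof is correct and follows essentially the same route as the paper: identify $\Sel(E/\K)^{\Delta}$ with $\Sel(E/\Q^{\mathrm{cyc}})$ via Lemma \ref{Selmerequality}, and show the inflation map $H^1(\Gamma,\cdot)\to H^1(\Gamma',\cdot)$ is an isomorphism because $\Delta$ has order prime to $p$. The only cosmetic difference is that you invoke the full Hochschild--Serre spectral sequence where the paper uses only the five-term inflation--restriction sequence, but the vanishing input ($H^q(\Delta,D')=0$ for $q\geq 1$) and the conclusion are the same.
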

\begin{proof}
Let \[\psi:H^0(\Gamma', \Sel(E/\K))\rightarrow H^1(\Gamma', \Sel(E/\K))\] the composite of the map \[\varphi: H^0(\Gamma, \Sel(E/\K)^{\Delta})\rightarrow H^1(\Gamma, \Sel(E/\K)^{\Delta})\] with the inflation map \[\operatorname{inf}:H^1(\Gamma,\Sel(E/\K)^{\Delta})\rightarrow H^1(\Gamma', \Sel(E/\K)).\] The inflation map fits the inflation-restriction sequence
\[0\rightarrow H^1(\Gamma,\Sel(E/\K)^{\Delta})\rightarrow H^1(\Gamma', \Sel(E/\K))\rightarrow H^1(\Delta, \Sel(E/\K))^{\Gamma}.\]Since the order of $\Delta$ is coprime to $p$, it follows that \[H^1(\Delta, \Sel(E/\K))=0\]and therefore the inflation map is an isomorphism. Therefore, $\psi$ can be identified with $\varphi$. By Lemma $\ref{Selmerequality}$, the map $\varphi$ can be identified with the map 
\[\varphi_E:H^0(\Gamma, \Sel(E/\Q^{cyc}))\rightarrow H^1(\Gamma, \Sel(E/\Q^{cyc})).\]Putting it all together, \[\chi_t(\Gamma', E)=\frac{\#\ker \psi}{\#\operatorname{cok} \psi}=\frac{\#\ker \varphi}{\#\operatorname{cok} \varphi}=\frac{\#\ker \varphi_E}{\#\operatorname{cok} \varphi_E}=\chi_t(\Gamma, E).\]
\end{proof}
Let $\Gamma'=\Gal(\K/\Q)$, the following congruence is a consequence of Theorem $\ref{gammacongruence}$ and Proposition $\ref{comparisonprop}$.
\begin{Cor}\label{cyclotomiccongruence} 
For elliptic curves $E_1$ and $E_2$ satisfying conditions $(\ref{one})$ to $(\ref{six})$, it is the case that $\Phi_{E_1}\times  \chi_t(\Gamma', E_1)=1$ if and only if $\Phi_{E_2}\times \chi_t(\Gamma',E_2)=1$.
If in addition condition $(\star)$ is satisfied for $E_1$ and $E_2$, then $ \chi_t(\Gamma', E_1)=1$ if and  only if $\chi_t(\Gamma',E_2)=1$.
\end{Cor}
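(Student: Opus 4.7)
The plan is to deduce Corollary \ref{cyclotomiccongruence} as an immediate consequence of the two results that immediately precede it, namely Theorem \ref{gammacongruence} and Proposition \ref{comparisonprop}. The key observation is that Proposition \ref{comparisonprop} gives an \emph{equality} $\chi_t(\Gamma, E) = \chi_t(\Gamma', E)$ whenever either side is finite, and in particular both sides vanish/equal one simultaneously. So all the work of transferring information between the $\Gamma$- and $\Gamma'$-Euler characteristics has already been done.

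Concretely, first I would fix $i \in \{1,2\}$ and observe that the hypotheses (4), (5), (6) on $E_i$ (good ordinary reduction at $p$, finiteness of $\Sh(E_i/\Q)[p]$, and non-degeneracy of the $p$-adic height pairing) together with the irreducibility of $\bar{\rho}_i$ from (3) are precisely the hypotheses of Lemma \ref{truncatedECSelmer1}. This ensures that $\chi_t(\Gamma, E_i)$ is finite (indeed a non-negative power of $p$), and hence by Proposition \ref{comparisonprop} so is $\chi_t(\Gamma', E_i)$, with
\[
\chi_t(\Gamma', E_i) \;=\; \chi_t(\Gamma, E_i).
\]
Multiplying by $\Phi_{E_i}$, which is independent of the choice of cyclotomic extension, gives
\[
\Phi_{E_i} \times \chi_t(\Gamma', E_i) \;=\; \Phi_{E_i} \times \chi_t(\Gamma, E_i)
\]
for $i = 1, 2$.

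Next I would apply Theorem \ref{gammacongruence} directly: the right-hand side equals $1$ for $E_1$ if and only if it equals $1$ for $E_2$. Transporting this equivalence back across the equality displayed above yields
\[
\Phi_{E_1} \times \chi_t(\Gamma', E_1) = 1 \;\Longleftrightarrow\; \Phi_{E_2} \times \chi_t(\Gamma', E_2) = 1,
\]
which is the first assertion. For the second assertion, when condition $(\star)$ holds for both curves, Lemma \ref{hypLemma} gives $\Phi_{E_1} = \Phi_{E_2} = 1$, so the conclusion simplifies to $\chi_t(\Gamma', E_1) = 1 \Leftrightarrow \chi_t(\Gamma', E_2) = 1$.

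There is no real obstacle here: the corollary is a formal consequence of results already established. If any step deserves attention, it is the verification that the finiteness hypotheses of Lemma \ref{truncatedECSelmer1} are in force so that Proposition \ref{comparisonprop} applies to \emph{both} curves simultaneously, but this is immediate from assumptions (3)--(6) of the introduction.
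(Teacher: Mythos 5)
Your proof is correct and follows exactly the route the paper indicates (the paper simply states the corollary is ``a consequence of Theorem \ref{gammacongruence} and Proposition \ref{comparisonprop}'' without spelling out the details). You have merely supplied the short verification — invoking Lemma \ref{truncatedECSelmer1} for finiteness, then Proposition \ref{comparisonprop} for the equality $\chi_t(\Gamma',E_i)=\chi_t(\Gamma,E_i)$, then Theorem \ref{gammacongruence} — which is precisely what the authors had in mind.
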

\section{Congruences of $G$-Euler Characteristics}
\par In this section, we prove congruences over $G$ where $G=\Gal(\Q_{\infty}/\Q)$ is the Galois group of an admissible extension. This is achieved by relating the truncated Euler characteristic over $G$ to that over the cyclotomic extension. The extensions considered will be the false Tate curve extension and the admissible $p$-adic Lie-extension arising from the $p^{\infty}$-torsion points of the elliptic curves $E_i$, for $i=1,2$. In the false Tate curve case, the corresponding Galois group is a semidirect product $\Z_p^{\times} \ltimes \Z_p$ and in the other case it is a finite index subgroup of $\GL_2(\Z_p)$.

\subsection{Congruences for the False Tate Curve Extension}
\par Let $m$ be a positive integer coprime to $Np$, where we recall that $N$ is the product of the conductors of $E_1$ and $E_2$. Let $\Q_{\infty}$ be the false Tate curve extension $\Q_{\infty}:=\Q(\mu_{p^{\infty}}, m^{\frac{1}{p^{\infty}}})$. Our assumptions imply that the reduction type of $E$ does not change in any number field extension of $\Q$ contained in $\Q_{\infty}$. In particular, if $w$ is a prime above $l$ in $\Q(\mu_p)$ then $E$ has the same reduction type at $l$ as well as $w$. Recall that $G:=\Gal(\Q_{\infty}/\Q)$ and $H:=\Gal(\Q_{\infty}/\Q^{\text{cyc}})$. Let $\Gamma'=\Gal(\K/\Q)$ and identify $G/H\simeq \Gamma$. Let $\Sigma$ be a set finite set of primes containing all primes $l|mp$ and all primes $l$ at which either $E_1$ or $E_2$ has bad reduction. 
\begin{Def}\cite[Definition 2.4]{shekharsujatha}
Let $\mathfrak{M}(E)$ denote the subset of primes $l\in \Sigma$ consisting of the primes $l$ dividing $m$ such that $E$ has good reduction at $l$ and the corresponding reduced curve has a
point of order $p$.
\end{Def}
Let $P_0$ be the set of primes $l\neq p$ which are ramified in $\Q_{\infty}$, this is the set of primes $l|m$. Recall that it is stipulated that $E$ has good reduction at each prime $l|m$ and as a consequence the set of primes $\mathfrak{M}(E)\subset P_0$ coincides with the set of primes $\Sigma_3(E)$ defined in \cite{shekharsujatha}.
\begin{Th}\cite[Theorem 4.10]{HV}\label{GtoGamma}
\[\chi_t(G,E)\sim\chi_t(\Gamma',E)\times \prod_{l\in \mathfrak{M}(E)} L_l(E,1)^{-1}.\]
\end{Th}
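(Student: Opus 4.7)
The plan is to descend the truncated Euler characteristic from $G$ to $\Gamma'$ through the intermediate $\Z_p$-extension $\Q_\infty/\K$ with Galois group $H'' := \Gal(\Q_\infty/\K) \simeq \Z_p$.

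First, I would establish a general formula: for any discrete $p$-primary $G$-module $M$ of finite truncated $G$-Euler characteristic,
\[\chi_t(G, M) \sim \frac{\chi_t(\Gamma', M^{H''})}{\#H^1(H, M)^\Gamma}.\]
The proof is a direct chase: the map $\psi_M$ in the definition of $\chi_t(G, M)$ factors as $\phi_{M^H}: H^0(\Gamma, M^H) \to H^1(\Gamma, M^H)$ followed by the injective inflation $H^1(\Gamma, M^H) \hookrightarrow H^1(G, M)$, whose cokernel is $H^1(H, M)^\Gamma$ because $\Gamma$ has cohomological dimension one. The analogous inflation $H^1(\Gamma, M^H) \to H^1(\Gamma', M^{H''})$ is already an isomorphism, since $\Delta = \Gal(\K/\Q^{cyc})$ has order prime to $p$, so $\chi_t(\Gamma', M^{H''})$ coincides with the Euler characteristic computed from $\phi_{M^H}$ alone.

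Second, I would apply this with $M = \Sel(E/\Q_\infty)$ and combine it with the control theorem at the level $\K \subset \Q_\infty$. The restriction map $\Sel(E/\K) \to \Sel(E/\Q_\infty)^{H''}$ has finite kernel and cokernel governed by the fundamental diagram, so by multiplicativity of $\chi_t$ in short exact sequences one obtains
\[\chi_t(G, E) \sim \chi_t(\Gamma', E) \cdot \frac{\chi_t(\Gamma', C)}{\chi_t(\Gamma', K) \cdot \#H^1(H, \Sel(E/\Q_\infty))^\Gamma},\]
where $K$ and $C$ denote the kernel and cokernel of the restriction. The remaining task is a term-by-term local computation: for a prime $v$ of $\K$ above $l \nmid mp$, the extension $\Q_\infty/\K$ is unramified at $v$ and contributes a unit; for $v | p$, the deep ramification of $\K_v$ together with the ordinary reduction hypothesis — via the identifications \eqref{lequalsp} and \eqref{pequalsp} — keeps the local contribution a $p$-adic unit; and for $v$ above $l | m$, a nontrivial contribution arises precisely when $E$ has good reduction at $l$ and $\tilde{E}(\F_l)[p] \neq 0$, i.e., when $l \in \mathfrak{M}(E)$, in which case an explicit local cohomology computation identifies the net contribution with $L_l(E, 1)^{-1}$ up to a $p$-adic unit.

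The main obstacle will be the explicit local calculation at primes $l \in \mathfrak{M}(E)$: one must pin down the $p$-adic valuation of the combined contribution of $K$, $C$, and $H^1(H, \Sel(E/\Q_\infty))^\Gamma$ at such $l$ and match it with $L_l(E, 1)^{-1}$. This requires a careful analysis of the decomposition groups at $l$ in $H''$, the ramification structure of $\Q_\infty$ at $l$, and the Frobenius action on $\tilde{E}[p^\infty]$ — essentially reproducing the local argument of \cite[Theorem 4.10]{HV} in the present refined setting. A subsidiary difficulty is verifying that all the auxiliary local contributions genuinely cancel between $\coker$, $\ker$, and $H^1(H, \Sel(E/\Q_\infty))^\Gamma$ at primes outside $\mathfrak{M}(E)$, which again relies on the assumptions that $E$ has good reduction at every $l | m$ and good ordinary reduction at $p$.
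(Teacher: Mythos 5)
The paper does not prove Theorem~\ref{GtoGamma}: it simply cites \cite[Theorem 4.10]{HV}. So there is no proof in the paper to compare against, and your proposal should be read as a reconstruction of the Hachimori--Venjakob argument. Your opening step is correct and is exactly the right reduction: since $\Gamma$ has $p$-cohomological dimension one, inflation $H^1(\Gamma,D^H)\to H^1(G,D)$ is injective with cokernel $H^1(H,D)^\Gamma$, giving
\[
\chi_t(G,D)=\frac{\chi_t(\Gamma,D^H)}{\#H^1(H,D)^\Gamma},
\]
and the passage from $\Gamma$ to $\Gamma'$ is harmless because $\Delta$ has order prime to $p$, precisely as in Proposition~\ref{comparisonprop}.

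The genuine gap is the phrase ``by multiplicativity of $\chi_t$ in short exact sequences.'' Unlike the ordinary Euler characteristic, the truncated Euler characteristic $\chi_t(\Gamma,-)$ is \emph{not} multiplicative in exact sequences: for $0\to A\to B\to C\to 0$ the ladder relating $\phi_A,\phi_B,\phi_C$ is broken by the connecting map $C^\Gamma\to A_\Gamma$, so the snake lemma does not apply. One can see that something must be wrong with the naive application here by a valuation count. Any finite $\Gamma$-module has trivial Herbrand quotient, hence trivial $\chi_t$; so if multiplicativity held, the control-theorem kernel and cokernel (being finite) would drop out and one would get $\chi_t(G,E)=\chi_t(\Gamma',E)/\#H^1(H,\Sel(E/\Q_\infty))^\Gamma$, whose $p$-adic valuation is \emph{at most} that of $\chi_t(\Gamma',E)$. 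But the theorem asserts the correction factor is $\prod_{l\in\mathfrak{M}(E)}L_l(E,1)^{-1}$, each factor of which has $p$-adic valuation \emph{at least} one by the very definition of $\mathfrak{M}(E)$. The two are incompatible whenever $\mathfrak{M}(E)\neq\emptyset$. So the terms coming from the failure of multiplicativity are not a nuisance to be washed away; they are where the local factors $L_l(E,1)^{-1}$ actually come from, and the bookkeeping has to track the connecting homomorphism in the $H''$-invariants/coinvariants sequence together with the decomposition of $H^1(H,\Sel(E/\Q_\infty))^\Gamma$ into local contributions. You identify this as ``the main obstacle,'' which is right, but as written your intermediate formula is not a valid waypoint. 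The remedy, as in \cite{HV}, is to avoid invoking multiplicativity of $\chi_t$ and instead work directly with the diagram relating the defining maps $\phi$ and $\psi$ over $\K$ and $\Q_\infty$ together with the Poitou--Tate/fundamental-diagram comparison of Selmer groups, extracting the local factor at each prime $v\mid m$ from the decomposition group computation of $\tilde E(\F_l)[p^\infty]$ and its Frobenius action. Your treatment of primes above $p$ via deep ramification and \eqref{lequalsp}, \eqref{pequalsp} is the correct ingredient there.
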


\begin{Th}\label{Gcongruence}
For elliptic curves $E_1$ and $E_2$ satisfying conditions $(\ref{one})$ to $(\ref{six})$ as in the introduction. It is the case that $\Phi_{E_1}\times  \chi_t(G, E_1)=1$ if and only if $\Phi_{E_2}\times \chi_t(G,E_2)=1$.
If in addition condition $(\star)$ is satisfied for $E_1$ and $E_2$, then $ \chi_t(G, E_1)=1$ if and  only if $\chi_t(G,E_2)=1$.
\end{Th}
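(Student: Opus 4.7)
The plan is to reduce the claim to its cyclotomic counterpart, Corollary \ref{cyclotomiccongruence}, using the Hachimori–Venjakob formula recorded as Theorem \ref{GtoGamma}:
\[
\chi_t(G, E_i) \; \sim \; \chi_t(\Gamma', E_i) \prod_{l \in \mathfrak{M}(E_i)} L_l(E_i, 1)^{-1}.
\]
Multiplying by $\Phi_{E_i}$ and taking $p$-adic valuations turns the condition $\Phi_{E_i} \chi_t(G, E_i) = 1$ into the statement that the non-negative integer
\[
v_p\!\bigl(\Phi_{E_i} \chi_t(\Gamma', E_i)\bigr) \; + \; \sum_{l \in \mathfrak{M}(E_i)} v_p\!\bigl(L_l(E_i, 1)^{-1}\bigr)
\]
vanishes, where non-negativity of each summand follows from Lemma \ref{truncatedECSelmer1} and Proposition \ref{comparisonprop} (for the first term) and from the identity $L_l(E_i, 1)^{-1} = l^{-1} \cdot \#\tilde{E}_i(\F_l) \in \Z_p$ for primes $l$ of good reduction (for the second). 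Hence $\Phi_{E_i} \chi_t(G, E_i) = 1$ is equivalent to the conjunction of $\Phi_{E_i}\chi_t(\Gamma', E_i) = 1$ and the vanishing of every $v_p(L_l(E_i,1)^{-1})$ with $l \in \mathfrak{M}(E_i)$.

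Next I would address each of the two resulting conditions in turn. The first is symmetric in $i$ by Corollary \ref{cyclotomiccongruence} applied to the pair $(E_1, E_2)$. For the second, I would observe that for $l \in \mathfrak{M}(E_i)$ one has $p \mid \#\tilde{E}_i(\F_l)$ by the very definition of $\mathfrak{M}(E_i)$, so $v_p(L_l(E_i, 1)^{-1}) \geq 1$; therefore the summand vanishes exactly when $\mathfrak{M}(E_i) = \emptyset$. The key point is then the equality $\mathfrak{M}(E_1) = \mathfrak{M}(E_2)$. Given $l \mid m$, the coprimality of $m$ with $N$ forces $l$ to be a prime of good reduction for both elliptic curves, so $E_i[p]$ is unramified at $l$ and $\tilde{E}_i(\F_l)[p]$ may be identified with the $\mathrm{Frob}_l$-fixed subspace of $E_i[p]$. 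Thus the condition $\tilde{E}_i(\F_l)[p] \neq 0$ depends only on the Galois module $E_i[p]$, and the isomorphism $E_1[p] \simeq E_2[p]$ supplied by hypothesis (\ref{two}) shows that it is independent of $i$.

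Combining these steps yields
\[
\Phi_{E_1}\chi_t(G, E_1) = 1 \;\Longleftrightarrow\; \Phi_{E_1}\chi_t(\Gamma', E_1) = 1 \text{ and } \mathfrak{M}(E_1) = \emptyset,
\]
and the analogous equivalence for $E_2$; both right-hand conditions are symmetric in $i$ by the previous paragraph, proving the first assertion. For the refinement under $(\star)$, Lemma \ref{hypLemma} immediately gives $\Phi_{E_1} = \Phi_{E_2} = 1$, so the first assertion specializes to $\chi_t(G, E_1) = 1 \Leftrightarrow \chi_t(G, E_2) = 1$. The only genuine obstacle to watch for is the identification $\mathfrak{M}(E_1) = \mathfrak{M}(E_2)$; once the residual isomorphism is brought to bear, the rest of the argument is $p$-adic valuation bookkeeping built on Theorem \ref{GtoGamma} and the already-established cyclotomic case.
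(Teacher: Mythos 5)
Your proof is correct and follows essentially the same route as the paper: reduce via Theorem \ref{GtoGamma} and Corollary \ref{cyclotomiccongruence}, and show $\mathfrak{M}(E_1)=\mathfrak{M}(E_2)$ so that the extra local factors are $p$-adic units simultaneously. The only cosmetic difference is in justifying $\mathfrak{M}(E_1)=\mathfrak{M}(E_2)$: you identify $\tilde{E}_i(\F_l)[p]$ with the $\mathrm{Frob}_l$-fixed subspace of $E_i[p]$ and invoke $E_1[p]\simeq E_2[p]$, while the paper uses the characterization of $\mathfrak{M}(E)$ from \cite[Lemma 2.5]{shekharsujatha} together with the congruence $L_l(E_1,1)^{-1}\equiv L_l(E_2,1)^{-1}\pmod{p}$ obtained from $a_l(f_1)\equiv a_l(f_2)\pmod{p}$ at the good primes $l\mid m$.
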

\begin{proof}
\par Let $E$ denote any one of the elliptic curves $E_1$ or $E_2$. Recall that $m$ is by assumption, coprime to $pN_E$ and hence $E$ has good reduction at each prime $l|m$. By \cite[Lemma 2.5]{shekharsujatha}, the set of primes $\mathfrak{M}(E)$ is the set of primes $l|m$ at which $p|L_l(E,1)^{-1}$. At each prime $l|m$, $E_1$ and $E_2$ both have good reduction, therefore, $a_l(f_1)\equiv a_l(f_2)\mod{p}$ and therefore,
\[L_l(E_1,1)^{-1}\equiv L_l(E_2, 1)^{-1}\mod{p}.\]As a consequence, $\mathfrak{M}(E_1)=\mathfrak{M}(E_2)$. Let $l\in \mathfrak{M}(E_1)=\mathfrak{M}(E_2)$.
By Theorem $\ref{GtoGamma}$, 
\[\chi_t(G,E_i)\sim\chi_t(\Gamma',E_i)\times \prod_{l\in \mathfrak{M}(E_i)} L_l(E_i,1)^{-1}.\]
On the other hand, by Corollary $\ref{cyclotomiccongruence}$,
\[\Phi_{E_1}\times \chi_t(\Gamma',E_1)=1\Leftrightarrow  \Phi_{E_2}\times\chi_t(\Gamma', E_2)=1.\]Putting it all together, one obtains the congruence 
\[\Phi_{E_1}\times\chi_t(G,E_1)=1\Leftrightarrow  \Phi_{E_2}\times \chi_t(G, E_2)=1.\]By Lemma $\ref{hypLemma}$, if condition $(\star)$ is satisfied then $\Phi_{E_1}=\Phi_{E_2}=1$. The assertion of the Theorem follows.
\end{proof}
\subsection{Congruences for $\GL_2$-extensions}
\par Let $E$ be an elliptic curve over $\Q$ with good ordinary reduction at $p$ for which the Galois module $E[p]$ is irreducible. Denote by $\Q_{\infty}(E):=\Q(E_{p^{\infty}})$ be the field obtained by adjoining the coordinates of the group of $p$-power division points of $E$. Let $G_E:=\Gal(\Q_{\infty}(E)/\Q)$ and identify $G_E$ with its image in $\operatorname{Aut}(T_p(E))\simeq \GL_2(\Z_p)$, where as usual $T_p(E):=\varprojlim_n E_{p^n}$ is the $p$-adic Tate-module of $E$. By a well-known theorem of Serre, $G_E$ is an open subgroup of $\GL_2(\Z_p)$. Let $H_E:=\Gal(\Q_{\infty}(E)/\Q^{cyc})$ and as usual, $\Gamma=G_E/H_E$. It was shown by Kato \cite{kato} that $\Sel(E/\Q^{cyc})$ is a cotorsion $\Lambda(\Gamma)$-module. It follows from this that the Selmer group $\Sel(E/\Q_{\infty}(E))$ is a cotorsion $\Lambda(G_E)$-module (cf. \cite{CH}). Let $\mathcal{M}_E$ be the set of primes at which the $j$-invariant of $E$ is non-integral. 

\begin{Th}\label{GL2toGamma}\cite[Theorem 3.1]{CSSLinks}
Suppose that $\chi_t(\Gamma, E)$ is finite then so is $\chi_t(G_E, E)$ and we have
\[\chi_t(G_E, E)\sim \chi_t(\Gamma, E)\times \prod_{l\in \mathcal{M}_E} L_l(E,1)^{-1}.\]
\end{Th}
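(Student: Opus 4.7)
The plan is to compare the truncated $G_E$-Euler characteristic with the cyclotomic one via a control diagram, with local correction factors appearing at primes of multiplicative reduction, mirroring the false Tate argument of Theorem \ref{GtoGamma} with $H_E$ in place of $H$.

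First I would build the $G_E$-analogue of the diagram in Lemma \ref{Selmerequality} relating $\Sel(E/\Q^{cyc})$ to $\Sel(E/\Q_\infty(E))^{H_E}$. The middle vertical map is the restriction on $H^1$ of $E_{p^\infty}$, whose kernel and cokernel are governed by Hochschild--Serre terms $H^i(H_E, E_{p^\infty}(\Q_\infty(E)))$ for $i=1,2$; since $H_E$ is a compact $p$-adic Lie group of positive dimension with no element of order $p$, these are cofinitely generated $\Z_p$-modules. The right vertical map is a product, over primes $w$ of $\Q^{cyc}$ above a suitable finite set $\Sigma$, of local restriction maps, whose kernels and cokernels I would then analyze prime by prime.

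Next comes the local computation. At $l \notin \mathcal{M}_E \cup \{p\}$ the curve $E$ has potentially good reduction, so the local Galois representation on $E_{p^\infty}$ becomes unramified on some finite layer inside $\Q_\infty(E)$, and using \eqref{lequalsp} the corresponding local cohomology contributes only $p$-adic units. At $l=p$ the ordinary filtration together with \eqref{pequalsp} reduces the local term to cohomology of $\tilde E_{p^\infty}$, which is again trivial up to units. At $l \in \mathcal{M}_E$, where $E$ has multiplicative reduction, I would use the Tate uniformization combined with the fact that the image of a decomposition group at $l$ is large inside $G_E \subseteq \GL_2(\Z_p)$; a direct computation of the local $H^1$ then yields a correction factor equal to $L_l(E,1)^{-1}$ up to a $p$-adic unit.

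Finally, I would assemble the orders of kernels and cokernels through the diagram, using the Snake lemma to transfer the calculation onto the map $\psi_D$ defining $\chi_t(G_E, E)$, and conclude
\[
\chi_t(G_E, E) \sim \chi_t(\Gamma, E) \times \prod_{l \in \mathcal{M}_E} L_l(E,1)^{-1}.
\]
Finiteness of $\chi_t(G_E, E)$ will fall out of the same diagram chase, since finiteness of $\chi_t(\Gamma, E)$ together with finiteness of each local correction factor forces $\ker \psi_D$ and $\operatorname{cok} \psi_D$ to be finite. The main obstacle will be extracting the \emph{exact} value $L_l(E,1)^{-1}$ at primes of multiplicative reduction rather than merely a nonzero constant: this requires a careful analysis of the Tate-parameter twist on $E_{p^\infty}$ as a module over the decomposition group at $l$, and of how this interacts with the non-commutative $H_E$-cohomology coming from the $\GL_2$-tower.
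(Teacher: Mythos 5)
The paper does not supply a proof of Theorem~\ref{GL2toGamma}; it is cited verbatim from \cite[Theorem 3.1]{CSSLinks}, so there is no in-text argument here against which to compare yours. Assessing your outline on its own merits: it captures the correct high-level architecture (a control diagram between the two Selmer groups, a Snake-lemma transfer onto $\psi_D$, and local Euler-characteristic computations at bad primes), which is indeed how the result is established in \cite{CSSLinks} and its antecedent \cite{CH}. However, two essential technical points are glossed over rather than handled, and this is where the actual content of the theorem lies.

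First, $H_E$ is a compact $p$-adic Lie group of positive dimension (dimension $3$ when $G_E$ is open in $\GL_2(\Z_p)$), not a finite group of order prime to $p$ as in Lemma~\ref{Selmerequality}, so the Hochschild--Serre terms $H^i\bigl(H_E, E_{p^\infty}(\Q_\infty(E))\bigr)$ neither vanish nor are typically finite. Remarking that they are cofinitely generated $\Z_p$-modules does not let you chase the diagram to a numerical identity between truncated Euler characteristics: one must actually compute the $\Gamma$-Euler characteristics of these $H_E$-cohomology groups, and of the analogous local kernels and cokernels, and show that the contributions combine into exactly the stated product. This is precisely where the cotorsion of $\Sel(E/\Q_\infty(E))$ over $\Lambda(G_E)$ and the finiteness of the $p$-cohomological dimension of $H_E$ do genuine work, and none of that appears in your sketch. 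Second, the claim that primes $l\notin\mathcal{M}_E\cup\{p\}$ contribute only $p$-adic units is not immediate from ``$E_{p^\infty}$ becomes unramified on a finite layer'': the local cohomology groups along the $G_E$-tower are infinite, and one needs a local Euler-characteristic computation in the style of \cite{CH} to see that their net contribution is a unit. You do correctly flag that extracting the exact factor $L_l(E,1)^{-1}$ at $l\in\mathcal{M}_E$ (primes of potentially multiplicative, not necessarily multiplicative, reduction) is the subtlest step; there the decomposition group at $l$ in $G_E$ contains an open subgroup of a Borel by the Tate parametrization, and a careful local Euler-characteristic computation forces the precise value. In short, the proposal names the right moving parts but does not yet carry out the Euler-characteristic calculus that makes them assemble into the stated formula.
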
We let $G_i:=G_{E_i}$.
\begin{Th}\label{GEcongruence}
Let $E_1$ and $E_2$ be elliptic curves satisfying conditions $(\ref{one})$ to $(\ref{six})$ as in the introduction. And further assume that at each prime $l$ where $E_i$ does not have potentially good reduction, \[p\nmid l-a_l(E_i).\] Then, it is the case that $\Phi_{E_1}\times  \chi_t(G_1, E_1)=1$ if and only if $\Phi_{E_2}\times \chi_t(G_2,E_2)=1$.
If in addition condition $(\star)$ is satisfied for $E_1$ and $E_2$, then $ \chi_t(G_1, E_1)=1$ if and  only if $\chi_t(G_2,E_2)=1$.
\end{Th}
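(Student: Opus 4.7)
The plan is to mirror the argument for Theorem \ref{Gcongruence} from the previous subsection, but now using the comparison theorem of Coates--Schneider--Sujatha (Theorem \ref{GL2toGamma}) in place of Theorem \ref{GtoGamma}. The key input is again Theorem \ref{gammacongruence}, which already handles the cyclotomic side.

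First, I would apply Theorem \ref{GL2toGamma} to each elliptic curve separately. Under conditions (\ref{one})--(\ref{six}), Proposition \ref{PerrinRiouresult} and Lemma \ref{truncatedECSelmer1} guarantee that $\chi_t(\Gamma, E_i)$ is finite, so the hypothesis of Theorem \ref{GL2toGamma} is met. This yields
\[
\chi_t(G_i, E_i) \sim \chi_t(\Gamma, E_i) \times \prod_{l \in \mathcal{M}_{E_i}} L_l(E_i, 1)^{-1}
\]
for $i = 1, 2$. Unlike the false-Tate situation, the sets $\mathcal{M}_{E_1}$ and $\mathcal{M}_{E_2}$ need not coincide: they consist of primes where the $j$-invariant of $E_i$ is non-integral, equivalently the primes where $E_i$ has potentially multiplicative reduction, and this is not controlled by the residual representation.

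Next I would show that, under the additional hypothesis of the theorem, each factor in the product $\prod_{l \in \mathcal{M}_{E_i}} L_l(E_i, 1)^{-1}$ is a $p$-adic unit, so that the entire product is a unit. For $l \in \mathcal{M}_{E_i}$ the curve $E_i$ has potentially multiplicative reduction at $l$, which is the same as saying $E_i$ does not have potentially good reduction at $l$. If the reduction is additive (but potentially multiplicative), then $a_l(E_i) = 0$ and $L_l(E_i, 1)^{-1} = 1$ is automatically a unit. If the reduction is (split or non-split) multiplicative then $a_l(E_i) = \pm 1$ and $L_l(E_i, 1)^{-1} = (l - a_l(E_i))/l$, which by the hypothesis $p \nmid l - a_l(E_i)$ is a $p$-adic unit. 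In either case the factor is a unit, so
\[
\Phi_{E_i} \times \chi_t(G_i, E_i) \sim \Phi_{E_i} \times \chi_t(\Gamma, E_i).
\]

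Finally, I would invoke Theorem \ref{gammacongruence} to deduce
\[
\Phi_{E_1} \times \chi_t(\Gamma, E_1) = 1 \iff \Phi_{E_2} \times \chi_t(\Gamma, E_2) = 1,
\]
and combine this with the $\sim$-equivalence above. Since $\chi_t(\Gamma, E_i)$ is a non-negative power of $p$ by Lemma \ref{truncatedECSelmer1}, and each $L_l(E_i, 1)^{-1}$ appearing above is a $p$-adic unit or else contributes to $\Phi_{E_i}$ already, $\sim 1$ is equivalent to $= 1$ on both sides. This gives the first statement. For the second, Lemma \ref{hypLemma} implies that $(\star)$ forces $\Phi_{E_1} = \Phi_{E_2} = 1$, which yields $\chi_t(G_1, E_1) = 1 \iff \chi_t(G_2, E_2) = 1$.

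The main obstacle is conceptual rather than computational: the groups $G_1$ and $G_2$ are genuinely different $p$-adic Lie groups, and the sets $\mathcal{M}_{E_1}$ and $\mathcal{M}_{E_2}$ need not agree, so one cannot directly compare the two $G_i$-Euler characteristics. The additional hypothesis $p \nmid l - a_l(E_i)$ at primes of not potentially good reduction is precisely the minimal condition that makes each correction term $\prod_{l \in \mathcal{M}_{E_i}} L_l(E_i, 1)^{-1}$ a $p$-adic unit for each curve separately, thereby decoupling the comparison from any matching of these sets and allowing us to reduce cleanly to the cyclotomic statement of Theorem \ref{gammacongruence}.
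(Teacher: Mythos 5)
Your proof is correct and follows essentially the same route as the paper: apply Theorem \ref{GL2toGamma} to each curve separately, identify $\mathcal{M}_{E_i}$ with the primes where $E_i$ fails to have potentially good reduction (Silverman's criterion on the $j$-invariant), observe that the extra hypothesis forces each correction factor $L_l(E_i,1)^{-1}$ to be a $p$-adic unit, and then reduce to Theorem \ref{gammacongruence}. The only difference is cosmetic: you spell out the two sub-cases (additive with $a_l=0$, versus multiplicative with $a_l=\pm 1$) and explicitly address why $\sim 1$ upgrades to $=1$ using that both sides are powers of $p$, details the paper leaves implicit.
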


\begin{proof}
By Theorem $\ref{gammacongruence}$, \[\Phi_{E_1}\cdot\chi_t(\Gamma , E_1)=1\Leftrightarrow  \Phi_{E_2}\cdot\chi_t(\Gamma ,E_2)=1.\] By Theorem $\ref{GL2toGamma}$, 
\[\chi_t(G_i, E_i)\sim \chi_t(\Gamma, E_i)\times \prod_{l\in \mathcal{M}_{E_i}} L_l(E_i,1)^{-1}.\] By \cite[Proposition 5.5]{Silverman}, a prime $l\in \mathcal{M}_{E_i}$ if and only if $E_i$ does not have potentially good reduction at $l$. By our assumption, at every prime $l\in \mathcal{M}_{E_i}$, we have that $L_l(E_i,1)$ is a $p$-adic unit. The assertion follows from this.
\end{proof}
\section{Numerical Examples}\label{section5}
\par In this short section we discuss some concrete examples which illustrate our results for $p=5$. All our computations are aided by Sage.
\subsection{Example 1:}
\par Let $E_1=201c1$ and $E_2=469a1$. Both elliptic curves have rank $1$ and good ordinary reduction at the prime $5$. Further, there is an isomorphism of the residual Galois representations $E_1[5]\simeq E_2[5]$, at the prime $5$, and the conditions $(\ref{one})$ to $(\ref{six})$ are satisfied. The example shows that $\chi_t(\Gamma,E_i)=1$ and $\Phi_{E_i}=1$ for $i=1,2$. As a result, 
\[\Phi_{E_1}\times \chi_t(\Gamma,E_1)=1\text{, and }\Phi_{E_2}\times \chi_t(\Gamma,E_2)=1.\] This verifies Theorem $\ref{gammacongruence}$. We further discuss results for more general $5$-adic Lie-extensions as in Theorem $\ref{Gcongruence}$ and Theorem $\ref{GEcongruence}$.
\par The elliptic curve $E_1$ has bad reduction at $3$ and $67$ and $E_2$ has bad reduction at $7$ and $67$. Also, note that \[
\begin{split}
    &\beta_1(3)=0, \beta_1(7)=1, \beta_1(67)=0,\\
    &\beta_2(3)=1,\beta_2(7)=0,\beta_2(67)=0
\end{split}\]and
\[
\begin{split}
    &3+\beta_1(3)-a_3(f_1)=4, 7+\beta_1(7)-a_7(f_1)=11, 67+\beta_1(67)-a_{67}(f_1)=68,\\
     &3+\beta_1(3)-a_3(f_1)=3, 7+\beta_1(7)-a_7(f_1)=8, 67+\beta_1(67)-a_{67}(f_1)=68.\\
\end{split}\]
Therefore for $E_1$ and $E_2$, condition $(\star)$ is satisfied and as a consequence, \[\Phi_{E_1}=\Phi_{E_2}=1.\]  By Theorem $\ref{gammacongruence}$, \[\chi_t(\Gamma, E_1)=1\Leftrightarrow \chi_t(\Gamma,E_2)=1.\] The $5$-adic regulators \[R_5(E_1/\Q)\sim 5\text{ and }R_5(E_2/\Q)\sim 5.\] For $i=1,2$, \[\#(\Sh(E_i/\Q)[5])=1,\tau_5(E_i)=1, \#\tilde{E}_i(\F_5)=1\text{ and } \#E_i(\Q)[5]=1.\] {By Proposition $\ref{PerrinRiouresult}$, \[\chi_t(\Gamma, E_i)=1\] for $i=1,2$, thus illustrating Theorem $\ref{gammacongruence}$.} \par Let us consider congruences over the false Tate curve extension.
For any integer $m$ not divisible by $3,5,7$ and $67$ let $G$ be the Galois group of the false Tate curve extension\[G:=\Gal(\Q(\mu_{5^{\infty}}, m^{\frac{1}{5^{\infty}}})/\Q).\]
Since condition $(\star)$ is satisfied, by Theorem $\ref{Gcongruence}$,
    \[\chi_t(G,E_1)=1\Leftrightarrow  \chi_t(G,E_2)=1.\]
\par Let us consider congruences over extensions cut out by $5^{\infty}$-torsion points of $E_1$ and $E_2$. For $i=1,2$, all primes $l$ of bad reduction, $l-a_l(f_i)$ is not divisible by $5$. Since condition $(\star)$ is satisfied, by Theorem $\ref{GEcongruence}$, \[\chi_t(G_1,E_1)=1\Leftrightarrow  \chi_t(G_2,E_2)=1.\] By Theorem $\ref{GL2toGamma}$, \[\chi_t(G_i, E_i)\sim \chi_t(\Gamma, E_i)\times \prod_{l\in \mathcal{M}_{E_i}} L_l(E_i,1)^{-1}.\] At each prime $l\in \mathcal{M}_{E_i}$, $l-a_l(f_i)$ is not divisible by $5$ and therefore $L_l(E_i,1)=1-a_l(f_i)l^{-1}$ is a $5$-adic unit. Therefore $\chi_t(G_i,E_i)=\chi_t(\Gamma, E_i)=1$ for $i=1,2$.
\subsection{Example 2:}
{This example illustrates the role played by the factors $\Phi_{E_i}$ for $i=1,2$.} In this example, condition $(\star)$ is not satisfied. Let $E_1$ and $E_2$ be the rank $1$ elliptic curves $E_1=37a1$ and $E_2=1406g1$. Both curves satisfy conditions $(\ref{one})$ to $(\ref{six})$. In this example \[\chi_t(\Gamma,E_1)=1\text{, and }\chi_t(\Gamma,E_2)=5^2,\]in particular, the truncated Euler-characteristics for $E_1$ and $E_2$ are different. On the other hand, it will be shown that 
\[\Phi_{E_1}=5^2\text{, and }\Phi_{E_2}=1.\] As a result, 
\[\Phi_{E_1}\times \chi_t(\Gamma,E_1)=5^2\text{, and }\Phi_{E_2}\times \chi_t(\Gamma,E_2)=5^2\] are both divisible by $5$, as predicted by Theorem $\ref{gammacongruence}$. We further discuss results for more general $5$-adic Lie-extensions as in Theorem $\ref{Gcongruence}$ and Theorem $\ref{GEcongruence}$. \par The curve $E_2$ has bad reduction at $2,19$ and $37$. Find that,
\[
\begin{split}
    &2+\beta_1(2)-a_2(f_1)=5, 19+\beta_1(19)-a_{19}(f_1)=20, 37+\beta_1(37)-a_{37}(f_1)=38,\\
    &2+\beta_1(2)-a_2(f_2)=1, 19+\beta_1(19)-a_{19}(f_2)=18, 37+\beta_1(37)-a_{37}(f_2)=38.\\
\end{split}\]
 It may be found that \[\Phi_{E_1}=5^2\text{ and } \Phi_{E_2}=1.\] The $5$-adic regulator \[R_5(E_1/\Q)\sim 5\] and it may be shown that \[\#(\Sh(E_1/\Q)[5])=1, \tau(E_1)=1, \#\tilde{E}_1(\F_5)=1\text{ and } \#E_1(\Q)[5]=1.\]On the other hand for $E_2$, the $5$-adic regulator \[R_5(E_2/\Q)\sim5\] and \[\#(\Sh(E_2/\Q)[5])=1, \tau(E_2)=150, \#\tilde{E}_2(\F_5)=1 \text{ and }\#E_2(\Q)[5]=1.\]Our computations show that \[\chi_t(\Gamma,E_1)=1 \text{ and } \chi_t(\Gamma, E_2)=5^2.\]
 Recall that $\Phi_{E_1}=5^2$ and $\Phi_{E_2}=1$ and Theorem $\ref{gammacongruence}$ asserts that 
 \[5|(\Phi_{E_1}\times \chi_t(\Gamma,E_1))\Leftrightarrow 5|(\Phi_{E_2}\times  \chi_t(\Gamma,E_2)).\]
\par We now illustrate our congruence results for false Tate curve extensions. For any integer $m$ not divisible by $2,5,19$ and $37$ and $G$ the Galois group of the false Tate curve extension $G:=\Gal(\Q(\mu_{5^{\infty}}, m^{\frac{1}{5^{\infty}}})/\Q)$. By Theorem $\ref{Gcongruence}$, $5^2\chi_t(G,E_1)\equiv \chi_t(G,E_2)\mod{5\Z_5}$. Therefore, $5$ divides $\chi_t(G,E_2)$ for any such $m$. \par Next, we compute truncated Euler characteristics for extensions cut out by $5^{\infty}$-torsion for $E_i$ for $i=1,2$. Even though condition $(\star)$ is not satisfied for $E_1$, Theorem $\ref{GEcongruence}$ applies. In fact, at every prime $l$ at which $E_i$ has bad reduction, $L_l(E_i,1)$ is a $5$-adic unit. By Theorem $\ref{GL2toGamma}$, $\chi(G,E_i)=\chi(\Gamma, E_i)$ for $i=1,2$. Therefore, \[\chi(G, E_1)=1\text{ and } \chi(G,E_2)=5^2.\]
\subsection{Example 3:} 
\par Let $E_1=82a1$ and $E_2=902a1$, the elliptic curves satisfy conditions $\ref{one}$ to $\ref{six}$ in the introduction. In particular, $E_1[5]\simeq E_2[5]$ as Galois-modules. It may be verified that condition $(\star)$ is satisfied, and as a result,
\[\Phi_{E_1}=\Phi_{E_2}=1.\]Repeating calculations from previous examples, find that\[\chi_t(\Gamma,E_1)=5\text{, and }\chi_t(\Gamma,E_2)=5^3.\]This implies that
\[\Phi_{E_1}\times \chi_t(\Gamma,E_1)=5 \text{, and }\Phi_{E_1}\times \chi_t(\Gamma,E_1)=5^3.\]These values are both divisible by $5$, as predicted by Theorem $\ref{gammacongruence}$, however, they are different. Hence the truncated Euler characteristic is not determined by the residual representation, only determined up to congruence.
\par We now consider congruences over false Tate curve extensions. The elliptic curve $E_1$ has bad reduction at $2$ and $41$, $E_2$ on the other hand has bad reduction at $2$,$11$ and $41$. For any integer $m$ not divisible by $2,11$ and $41$ and $G$ the Galois group of the false Tate curve extension $G:=\Gal(\Q(\mu_{5^{\infty}}, m^{\frac{1}{5^{\infty}}})/\Q)$. By Theorem $\ref{Gcongruence}$, \[\chi_t(G,E_1)\equiv \chi_t(G,E_2)\mod{5\Z_5}.\] Therefore, $5$ divides $\chi_t(G,E_2)$ for any such $m$. \par Next, we compute truncated Euler characteristics for extensions cut out by $5^{\infty}$-torsion for $E_i$ for $i=1,2$. Since condition $(\star)$ is satisfied, it follows that at every prime $l$ at which $E_i$ has bad reduction, $L_l(E_i,1)$ is a $5$-adic unit. By Theorem $\ref{GL2toGamma}$, $\chi(G,E_i)=\chi(\Gamma, E_i)$ for $i=1,2$. Therefore, \[\chi(G, E_1)=5\text{ and } \chi(G,E_2)=5^3.\]

\end{document}